\documentclass[11pt]{amsart}
\usepackage{amsmath}
\usepackage{amsfonts,amsmath}
\usepackage{paralist}
\usepackage{multirow}
\usepackage{array}
\usepackage{xcolor}
\usepackage[margin=1in]{geometry}
 \numberwithin{equation}{section}

\newtheorem{theorem}{Theorem}[section]

\newtheorem{lemma}[theorem]{Lemma}

\theoremstyle{definition}

\newcommand{\ot}{\Omega_T }
\newcommand{\opt}{\Omega^\prime_T }
\newcommand{\op}{\Omega^\prime}
\newcommand{\po}{\partial\Omega}
\newcommand{\mdiv}{\textup{div}}

\newcommand{\io}{\int_{\Omega}}
\newcommand{\iop}{\int_{\Omega^\prime}}
\newcommand{\ioT}{\int_{\Omega_{T}}}

\newcommand{\tmax}{T_{0}}
\newcommand{\vp}{\varphi}
\newcommand{\ra}{\rightarrow}

\newcommand{\pe}{\Phi_e}
\newcommand{\ps}{\Phi_s}
\newcommand{\pet}{\Phi_e^{(\tau)}}
\newcommand{\pst}{\Phi_s^{(\tau)}}
\newcommand{\ct}{C^{(\tau)}}
\newcommand{\mt}{M^{(\tau)}}
\newcommand{\lt}{L^{(\tau)}}

\newcommand{\kac}{\kappa(C)}
\newcommand{\kact}{\kappa_\tau(C)}
\newcommand{\kactt}{\kappa_\tau(C^{(\tau)})}
\newcommand{\thc}{C^+}

\newcommand{\ee}{\varepsilon_e}
\newcommand{\pt}{\partial_t}

\newcommand{\vep}{\varepsilon}
\newcommand{\ep}{\varepsilon}
\newcommand{\esup}{\textup{ess sup}}
\newcommand{\einf}{\textup{ess inf}}
\begin{document}




\title[a PDE model
for Lithium-ion batteries ] 
{ Life span of solutions to a PDE model
	for Lithium-ion batteries in high space dimensions }
	\author{Xiangsheng Xu}\thanks
{Department of Mathematics and Statistics, Mississippi State
	University, Mississippi State, MS 39762.
	{\it Email}: xxu@math.msstate.edu.}

	\subjclass{Primary: 35A01,35M10, 35J57, 35K20, 35B50.}
	\keywords{Lithium-ion batteries; mathematical model; the De Giorgi iteration scheme; existence and boundedness of a solution.}
\email{xxu@math.msstate.edu}
	
	\bigskip
	
		\begin{abstract}In this paper we study a system of partial differential equations which models lithium-ion batteries. The system describes the conservation of Lithium and conservation of charges in the solid and electrolyte phases, together with the conservation of energy. The mathematical challenge is due to the fact that the reaction terms in the system involve the hyperbolic sine function along with possible degeneracy in one of the high order terms. We obtain a local existence assertion for the initial boundary problem for the system which offers insight into how long a battery can last.
		\end{abstract}
	\maketitle
	


	\section{Introduction}
	A lithium-ion battery or Li-ion battery (abbreviated as LIB) is a type of rechargeable battery. The technology was largely developed by John Goodenough, Stanley Whittingham, Rachid Yazami, and Akira Yoshino during the 1970s–1980s, and then commercialized by a Sony and Asahi Kasei team led by Yoshio Nishi in 1991. 
	 Lithium-ion batteries are commonly used for portable electronics and electric vehicles and are growing in popularity for military, civilian, and aerospace applications.
A mathematical model describing the key factors of the battery operation can be very helpful for the
	design and optimization of 
	battery performance.
	Based on a macro-homogeneous approach developed by Newman (see \cite{N1,N2})
	several mathematical models have been developed for these purposes (see \cite{CE,DNGST,FDN,F,GW,HMA,WGL,RA}) which include the main physics present in charge/discharge
	processes. 

	A typical Lithium-ion battery cell ($\Omega$) has three regions: a porous negative electrode ($\Omega_a$), a porous positive electrode ($\Omega_c$), and an electro-blocking separator ($\Omega_s$). See Fig.\ref{fig1} below.
	\begin{figure}[h]
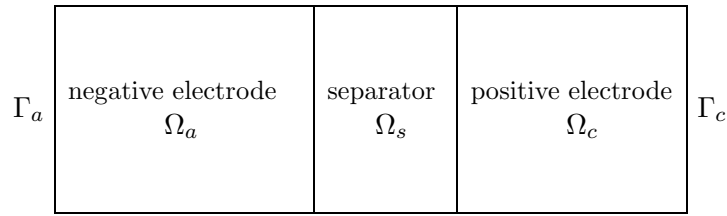

		\centering
	$\Gamma_a$	\begin{tabular}{ | m{8em} | m{4em}| m{7em} | } 
			\hline
			& & \\
				& & \\ 
{\small negative electrode}&  {\small separator}&{\small positive electrode}\\ 
		\hspace{.45in}	
 $\Omega_a$ &   	\hspace{.2in}  $\Omega_s$ & 
		\hspace{.45in} 
 $\Omega_c$  \\ 
			& & \\ 
		 & &  \\ 
			\hline
		\end{tabular}	$\Gamma_c$
	\hspace{-.5in}	\caption{ The domain $\Omega$} 
	\label{fig1}
\end{figure}
In the cell there is stored an electrolyte, which is a concentrated solution containing charged species (lithium ions) that move along the cell in response to an electrochemical potential gradient. In the batteries, lithium ions move from the negative electrode through an electrolyte to the positive electrode during discharge, and back when charging. 
The concentration of the lithium ions in the electrolyte, $C$, the electric potential in the electrodes, $\ps$ , and the electric potential measured by a reference Lithium electrode in the electrolyte, $\pe$, can be modeled, for each $t\in (0,T)$, by the following equations expressing the conservation of various physical quantities \cite{WXZ}: 
\begin{eqnarray}
-\mdiv(\kac\nabla\pe)+\mdiv\left(\alpha_1\kac\nabla\ln C\right)-S_e&=&0\ \ \mbox{in $\Omega$,}\label{pe}\\
-\mdiv(\sigma\nabla\ps)+S_e&=&0\ \ \mbox{in $\Omega^\prime\equiv\Omega_a\cup\Omega_c$,}\label{ps}\\
\ee \pt C-\mdiv(D\nabla C)-\alpha_3S_e&=&0\ \ \mbox{in $\Omega$.}\label{c}
\end{eqnarray}
We further specify the terms in the system. We assume:
\begin{enumerate}
	\item[(H1)] The function $\kac$ lies in the space $C[0,\infty)$, satisfying
	\begin{equation}\label{kacon}
	\mbox{	$\kappa(0)=0,\ \kac>0$ for $C>0$, and $\kappa(C)\geq c_0C^{\alpha_0}$  on $[0,s_0)$ for some $c_0, s_0, \alpha_0>0$}.
	\end{equation}
		\item[(H2)]	$\sigma\in L^\infty(\op) ,\ee\in L^\infty(\Omega), D\in L^\infty(\ot)$ with
		$$\einf_{\op}\sigma>0, \ \ \einf_{\Omega}\ee>0, \ \ \einf_{\ot}D>0.$$
\item[(H3)]	The term $S_e$ represents a function of $C,\ps,\pe$, which has the expression
	\begin{equation}\label{se}
		S_e=S_e(C,\ps,\pe)=\left\{\begin{array}{c}
			\alpha_4\sqrt{C}\sinh\left(\alpha_2(\ps-\pe-U(C))\right)\ \ \mbox{in $\Omega^\prime$,}\\
			0\ \ \mbox{ in $\Omega_s$, }
		\end{array}\right.
	\end{equation}
	where $U$ is a bounded smooth function of $C$.
		\item[(H4)]	The numbers $\alpha_i, i=1,2,3,4$, in \eqref{pe} and \eqref{se} are all positive numbers, whose precise values are determined by the physical properties of the materials involved.
\end{enumerate}

To prescribe the initial boundary conditions, we first set
$$\Gamma_a=\po\cap\partial\Omega_a,\ \ \Gamma_c=\po\cap\partial\Omega_c.$$
We call $\Gamma_a\cup\Gamma_c$ the  external boundary of $\Omega^\prime$ and denote it by $\partial_{\textup{ext}}\Omega^\prime$. Subsequently, we impose
\begin{eqnarray}
\frac{\partial\pe}{\partial \mathbf{n}}&=&0\ \ \mbox{on $\partial\Omega$,}\nonumber\\
\frac{\partial\ps}{\partial \mathbf{n}}&=&0\ \ \mbox{on $\partial\Omega^\prime\setminus\partial_{\textup{ext}}\Omega^\prime$,}\label{bcs}\\
-\sigma\frac{\partial\ps}{\partial \mathbf{n}}&=&I\ \ \mbox{on $\partial_{\textup{ext}}\Omega^\prime$,}\nonumber\\
\frac{\partial C}{\partial \mathbf{n}}&=&0\ \ \mbox{on $\partial\Omega$,}\nonumber\\
C(x,0)&=& C_0(x)\ \ \mbox{on $\Omega$,}\nonumber
\end{eqnarray}
where $\mathbf{n}$ is the unit outward normal to the boundary and $I$ is the current density (a given quantity). 
As in \cite{WXZ}, we assume
\begin{equation}\label{bcs2}
\int_{\partial_{\textup{ext}}\Omega^\prime}I dS=0.
\end{equation}
This condition means that
 no charges are generated or consumed within the battery. Equipped with this, we can simplify the problem a little bit. To this end, we observe that
 \eqref{bcs2} combing with 
\eqref{bcs} yields
\begin{equation}
\int_{\partial\Omega^\prime}\sigma\frac{\partial\ps}{\partial \mathbf{n}}dS=0.\nonumber
\end{equation}
Consequently, (under suitable assumptions on $I$) we can find a function $\phi\in W^{1,2}(\Omega^\prime)\cap L^{\infty}(\Omega^\prime)$ such that
\begin{eqnarray}
-\mdiv(\sigma\nabla\phi)&=&0\ \ \mbox{in $\Omega^\prime$,}\nonumber\\
\frac{\partial\phi}{\partial \mathbf{n}}&=&\frac{\partial\ps}{\partial \mathbf{n}}\ \ \mbox{on $\partial\Omega^\prime$.}\nonumber
\end{eqnarray}
We make the following change of dependent variables
\begin{equation}
\ps\leftarrow \ps-\phi,\ \ \pe\leftarrow\pe-\alpha_1\ln C.\nonumber
\end{equation}
Accordingly, we represent $S_e$ in terms of the new dependent variables
\begin{eqnarray}
S_e&=&\alpha_4\sqrt{C}\sinh\left(\alpha_2(\ps-\pe)+\alpha_2\phi-\alpha_1\alpha_2\ln C-\alpha_2U(C)\right)\chi_{\Omega^\prime}\nonumber\\
&=&\frac{1}{2}\alpha_4\sqrt{C}\left(hC^{-d}e^{\alpha_2(\ps-\pe)}-h^{-1}C^{d}e^{-\alpha_2(\ps-\pe)}\right)\chi_{\Omega^\prime},\nonumber
\end{eqnarray}
where 
$$ d=\alpha_1\alpha_2\ \mbox{ and}\ \ h=e^{\alpha_2(\phi-U(C))}.$$
%
In summary, we obtain the following problem
\begin{eqnarray}
-\mdiv(\kac\nabla\pe)&=&S_e\chi_{\Omega^\prime}\ \ \mbox{in $\ot\equiv \Omega\times(0,T)$,}\label{cpe}\\
-\mdiv(\sigma\nabla\ps)&=&-S_e\ \ \mbox{in $\opt\equiv\Omega^\prime\times(0,T)$,}\label{cps}\\
\ee \pt C-\mdiv(D\nabla C)&=&\alpha_3S_e\chi_{\Omega^\prime}\ \ \mbox{in $\ot$,}\label{cc}\\
\frac{\partial\pe}{\partial \mathbf{n}}&=&0\ \ \mbox{on $\partial\Omega\times(0,T)$,}\label{pebc}\\
\frac{\partial\ps}{\partial \mathbf{n}}&=&0\ \ \mbox{on $\partial\Omega^\prime\times(0,T)$,}\label{psbc}\\
\frac{\partial C}{\partial \mathbf{n}}&=&0\ \ \mbox{on $\partial\Omega\times(0,T)$,}\label{cbc}\\
C(x,0)&=& C_0(x)\ \ \mbox{on $\Omega$, }\label{ini}
\end{eqnarray}
where
 $T>0$.
Note that the introduction of the function $f$ in \cite{WXZ} is unnecessary. 

 If we introduce the function
\begin{equation}
	G(y_1, y_2, y_3)=y_1y_2^{-d}e^{\alpha_2y_3}-y_1^{-1}y_2^{d}e^{-\alpha_2y_3} \ \ \mbox{for $(y_1,y_2,y_3)\in (0,\infty)\times(0,\infty)\times\mathbb{R}$,}\nonumber
\end{equation}
then we can write $S_e$ in the form
\begin{equation}
	S_e=\frac{1}{2}\alpha_4\sqrt{C}G(h, C,\ps-\pe)\chi_{\Omega^\prime}.\nonumber
\end{equation} 
For simplicity, we assume $U=1$. Consequently, we can view $h$ as a given  function with the property
\begin{equation}
	\frac{1}{K}\leq h\leq K\ \ \mbox{for some $K\geq 1$}.\label{hb}
\end{equation}
The case where $U$ is an increasing function of $C$ can be handled in an entirely similar manner. 

 Define the function spaces
 \begin{eqnarray}
 	X_T&=&L^{\infty}(\Omega_{T})\times L^{\infty}(\Omega^\prime_{T})\times C(\overline{\Omega_{T}}),\label{xdef}\\
 	 Y_T&= &  L^\infty(0,T; W^{1,2}(\Omega))\times L^\infty(0,T; W^{1,2}(\op))\times L^2(0,T; W^{1,2}(\Omega)).\label{ydef}
 \end{eqnarray}
 Local existence of a weak solution in $X_T\cap Y_T$ to \eqref{cpe}-\eqref{ini} was already established in \cite{WXZ}. The result there asserts that there is positive number $\tmax$ such that  problem \eqref{cpe}-\eqref{ini} has a solution for  $T=\tmax$. This is to be expected because a battery can last for only a limited period of time. However, the local existence theorem in \cite{WXZ} did not offer any quantitative information on the size of $\tmax$. The objective of this paper is to fill this gap. To be precise, we have:
 \begin{theorem}\label{thm1} Let (H1)-(H4) be satisfied.
 	Assume:
 	\begin{enumerate}
 		\item[\textup{(H5)}]$N\geq 2$ and $\po, \partial\op$ are Lipschitz;
 			\item[\textup{(H6)}] $ h\in C(\overline{\ot})$ is such that \eqref{hb} is satisfied;
 		\item[\textup{(H7)}] $C_0(x)\in C^{\alpha_5}(\overline{\Omega})$ for some $\alpha_5\in (0,1)$ with the property
 		\begin{equation}
 			\min_{\overline{\Omega}}C_0(x)>0;\nonumber
 		\end{equation}
 		\item[\textup{(H8)}] $d>\frac{1}{2}$;
 		\item[\textup{(H9)}]  $|\Omega_s|<|\op|$.
 		\end{enumerate}
 	Then there is a positive number $\tmax$ such that for $T=\tmax$ problem  \eqref{cpe}-\eqref{ini} has a weak solution in $X_T\cap Y_T$ with the property 
 	\begin{eqnarray}
 		\min_{\overline{\ot}} C&>&0,\\
 		\io\pe(x,t)dx+\iop\ps(x,t)dx&=&0.\label{dess}
 	\end{eqnarray}
 Furthermore, the number $\tmax$ satisfies the equation \eqref{tzdf} below.
 	 \end{theorem}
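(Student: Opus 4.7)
The plan is to construct the solution via approximation and then extract a quantitative life span from a suitable nonlinear differential inequality controlling all relevant norms. One would first introduce a family of regularized problems by truncating both the hyperbolic sine nonlinearity (replacing $\sinh$ by a bounded Lipschitz version on a large interval) and the possibly degenerate coefficient $\kappa(C)$ (say by $\kappa(C)+\varepsilon$). For each regularization, short-time existence of a weak solution in $X_T \cap Y_T$ is available by a Schauder/Leray--Schauder fixed point argument on the linearized triplet $(\Phi_e,\Phi_s,C)$, exactly as in the existence scheme of \cite{WXZ}. The real content is then to prove uniform-in-$\varepsilon$ estimates whose finiteness is precisely what forces $T\le\tmax$.

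The heart of the proof is a set of coupled exponentially-weighted energy estimates designed to absorb the $\sinh$ source. I would test \eqref{cpe} against $e^{\alpha_2\pe}$ and \eqref{cps} against $e^{-\alpha_2\ps}$, so that the resulting reaction terms combine with the factor $hC^{-d}e^{\alpha_2(\ps-\pe)} - h^{-1}C^{d}e^{-\alpha_2(\ps-\pe)}$ appearing in $S_e$ to produce a coercive contribution of the form $\sqrt{C}\bigl(C^{-d}e^{2\alpha_2(\pe-\ps)/2}+\cdots\bigr)$. Summing the two inequalities and using \eqref{hb} yields simultaneous bounds on $\kac e^{\alpha_2\pe}|\nabla\pe|^2$, $\sigma e^{-\alpha_2\ps}|\nabla\ps|^2$, and on $\sqrt{C}\,C^{-d}e^{\alpha_2(\ps-\pe)}$ in $L^1(\ot)$. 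The normalization \eqref{dess} combined with hypothesis (H9) then activates a Poincar\'e-type inequality linking the $L^2$ norm of $\pe$ on $\Omega$ to that of $\ps$ on $\op$ and the gradient norms, converting the weighted energy bounds into two-sided $L^p$ control of $\pe$ and $\ps$ in terms of $\|C\|_\infty$ and $\min C$. A De Giorgi iteration on the level sets $\{\pe>k\}$ and $\{\ps>k\}$ (with cutoff $e^{\alpha_2(\pe-k)_+}$) promotes these to $L^\infty$ bounds.

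A separate argument propagates the pointwise positive lower bound on $C$. Here hypothesis (H8) $d>\tfrac12$ is used decisively: testing \eqref{cc} against $(C-\delta)^{-\beta}_+$ for small $\delta>0$ and appropriate $\beta$, the negative part $-\tfrac{1}{2}\alpha_4\sqrt{C}\,h^{-1}C^{d}e^{-\alpha_2(\ps-\pe)}$ of $S_e$ multiplied by the test function is uniformly bounded (since $C^{1/2+d-\beta}$ stays bounded as $C\downarrow 0$), while the positive part $\sqrt{C}\,hC^{-d}e^{\alpha_2(\ps-\pe)}$ contributes a term that is absorbable thanks to $d>1/2$ and the previously obtained exponential moments of $\ps-\pe$. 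A Stampacchia-type iteration then delivers $\min_{\overline{\ot}}C>0$. Continuity of $C$ (hence membership in the last factor of $X_T$) follows from standard De Giorgi--Nash--Moser theory applied to the now-bounded source in \eqref{cc}, using (H7) and the Lipschitz assumption on $\partial\Omega$.

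The final step is to assemble the estimates into a scalar ODE. Defining an auxiliary quantity $\mathcal{E}(t)$ built from $\|C(t)\|_\infty$, $1/\min_{\overline{\Omega}}C(t)$, and $\|\pe(t)\|_\infty+\|\ps(t)\|_\infty$, the bounds above yield an inequality
\begin{equation*}
\mathcal{E}'(t)\le F\bigl(\mathcal{E}(t)\bigr),\qquad \mathcal{E}(0)=\mathcal{E}_0,
\end{equation*}
with $F$ locally Lipschitz but superlinear (because the exponentials reappear upon plugging the $L^\infty$ bounds back into $S_e$). The life span $\tmax$ is then the blow-up time of this ODE, determined by the implicit relation $\int_{\mathcal{E}_0}^{\infty}\frac{ds}{F(s)}=\tmax$, which is the content of \eqref{tzdf}. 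Passing to the limit in the regularization via the standard compactness provided by the $Y_T$ estimates and the continuity of $C$ concludes the proof. The main obstacle, and the step requiring the most care, is closing the bootstrap loop: each pointwise bound on $\pe,\ps$ depends on $\min C$ and $\|C\|_\infty$ through exponentials, and conversely the $C$-estimates depend on $\ps-\pe$ through $S_e$; the hypotheses (H8) and (H9) are exactly what make this circular dependence consistent and yield a finite but positive $\tmax$.
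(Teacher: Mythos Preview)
Your overall architecture (regularize, a priori estimates uniform in the regularization, compactness) matches the paper, but two of your central mechanisms do not work as written and differ materially from what the paper does.

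\medskip
\textbf{The exponential test functions do not close.} Testing \eqref{cpe} with $e^{\alpha_2\pe}$ and \eqref{cps} with $e^{-\alpha_2\ps}$ and summing gives
\[
\alpha_2\!\int_\Omega\!\kappa e^{\alpha_2\pe}|\nabla\pe|^2+\alpha_2\!\int_{\op}\!\sigma e^{-\alpha_2\ps}|\nabla\ps|^2
=\tfrac{\alpha_4}{2}\!\int_{\op}\!\sqrt{C}\Bigl[hC^{-d}\bigl(e^{\alpha_2\ps}+e^{-\alpha_2\pe}\bigr)-h^{-1}C^{d}\bigl(e^{2\alpha_2\pe-\alpha_2\ps}+e^{\alpha_2\pe-2\alpha_2\ps}\bigr)\Bigr].
\]
The term $hC^{-d}(e^{\alpha_2\ps}+e^{-\alpha_2\pe})$ is positive and is not dominated by anything on the left, so no coercive bound on $e^{\alpha_2(\ps-\pe)}$ emerges. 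The paper never uses exponential test functions. Instead it exploits $\partial_{y_3}G\ge 2\alpha_2>0$: writing $G(h,C,\ps-\pe)=G(h,C,0)+\partial_{y_3}G(h,C,\xi)(\ps-\pe)$ and testing \eqref{cpe},\eqref{cps} with $\pe,\ps$ yields genuine coercivity in $(\ps-\pe)^2$ plus gradient control (the paper's Lemma giving \eqref{esl2}). The $L^\infty$ bound for $\pe,\ps$ then comes from a Moser iteration with test functions $((\pe)^++b)^n$, $((\ps)^++b)^n$ (not a De Giorgi level-set argument), again using only the monotonicity of $G$ in its third slot to make the reaction term one-signed. Hypothesis (H9) enters exactly where you say, via a Poincar\'e argument using the zero-mean relation $\int_\Omega\pe+\int_{\op}\ps=0$.

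\medskip
\textbf{The life span is not determined by an ODE.} Because the $\pe,\ps$ equations are elliptic, there is no differential inequality $\mathcal{E}'\le F(\mathcal{E})$; the $L^\infty$ bounds on $\pe,\ps$ are instantaneous in $t$ and depend on $\max C$ and $1/\min C$ through the estimate \eqref{lesb}. Feeding this back into the De Giorgi bounds for $\max C$ and $1/\min C$ (the latter via the test function $((C)^{-1/2}-k^{1/2})^+$, where (H8) makes the crucial exponent positive) yields, for $a(T)=\max C\cdot\max(1/C)$, a \emph{static} transcendental inequality
\[
a(T)<cg^2(T)\,e^{c\,a(T)^{\gamma}}+c,
\]
with $g(T)\to 0$ as $T\to 0$. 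The paper then invokes a continuity lemma (its Lemma~\ref{prop2.2}): there is a unique pair $(\varepsilon_0,s_0)$ for which $f_{\varepsilon_0}(s)=\varepsilon_0 e^{cs^\gamma}-s+c$ has a double zero, and $\tmax$ is defined by $cg^2(\tmax)=\varepsilon_0$, which is precisely \eqref{tzdf}. This is not the integral blow-up formula $\int_{\mathcal{E}_0}^\infty ds/F(s)=\tmax$ you describe.

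\medskip
In short: your use of (H8), (H9) and the bootstrap philosophy are on target, but the specific energy identity you propose does not give the claimed control, and your characterization of \eqref{tzdf} is incorrect. The paper's substitutes are the mean-value/monotonicity trick for $G$ and the algebraic continuation argument of Lemma~\ref{prop2.2}.
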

 A careful examination of the proof of \eqref{tzdf} below reveals how $\tmax$ ``explicitly'' depends on the given data. At least,  we can gain an estimate for $\tmax$ (see \eqref{f6} below). Our method is more constructive than the one employed in \cite{WXZ}. Obviously, $\tmax$ represents a lower bound for battery life.

Since $\kappa(0)=0$, we must obtain a positive lower bound for $C$ to avoid degeneracy in \eqref{cpe}. This also removes the singularity of $S_e$ at $C=0$. It turns out that condition (H8) is sufficient to do the deed. See Lemma \ref{lclb} below. It is worth noting that (H8) is also the condition which ensures that 
$S_e$ is a decreasing function of $C$. Since $\sigma$ is taken to be $0$ on $\Omega_s$, equation \eqref{cps} is only satisfied on $\opt$. Condition (H9) is assumed to circumvent the difficulty caused by this. In applications, (H9) makes sense because separators $\Omega_s$ are rather thin.
The reaction term $S_e$ is messy at first glance. A closer examination shows that the function $G$ in $S_e$ is the composition of $b(s)\equiv s-\frac{1}{s}$ with the term $hC^{-d} e^{\alpha_2(\ps-\pe)}$. The function $b(s)$ has two singular points at $0$ and $\infty$. This suggests that we need to establish
$$\ps-\pe\in L^\infty(\opt), \ \ C\in  L^\infty(\ot)\ \ \mbox{with $\einf_{\ot} C>0$.}$$
It turns out that the above is indeed sufficient for an existence assertion for our problem.
We observe
\begin{eqnarray}
	\partial_{y_3}G(y_1, y_2, y_3)&=&\alpha_2\left(y_1y_2^{-d}e^{\alpha_2y_3}+y_1^{-1}y_2^{d}e^{-\alpha_2y_3}\right)\nonumber\\
	&\geq &2\alpha_2>0\ \ \mbox{for $(y_1,y_2,y_3)\in (0,\infty)\times(0,\infty)\times\mathbb{R}$, and}\label{gpr}\\
	\partial_{y_2}G(y_1,y_2,y_3)&=&-\frac{d}{y_2}(y_1y_2^{-d}e^{\alpha_2y_3}+y_1^{-1}y_2^{d}e^{-\alpha_2y_3})\nonumber\\
	&\leq& -\frac{2d}{y_2}<0\ \ \mbox{on $(0,\infty)\times(0,\infty)\times\mathbb{R}$}.\nonumber
\end{eqnarray}
These properties of $G$ will be extensively exploited.

The uniqueness of a solution clearly does not hold. If $(\pe,\ps, C)$ is a solution, so is $(\pe+g(t),\ps+g(t), C)$ for any $g(t)$. It would be interesting to know if the uniqueness holds under \eqref{dess}. 

There is a large body of literature devoted to the mathematical study of lithium-ion batteries, and we will not attempt to make a comprehensive review. As far as we know,  all the current work is concerned with the so-called (P2D) models in which the space dimension is essentially one with the lonely exception of \cite{WXZ}. Here we only mention \cite{DGR,DNGST,FDN,K,WGL}, where
one can find numerical computations and analytic study of this and other related models, with
parameters corresponding to actual devices, that help to highlight the structure
of models and show the relevance to the applications.  As observed in \cite{WXZ},  the gap between $N=1$ and $N>1$ is substantial in terms of mathematical analysis.

The space dimension $N$ is assumed to be bigger than $2$ for the convenience of applying the Sobolev embedding theorem. Our argument still works for $N=2$ under some minor adjustments. H\"{o}lder's inequality is often used without acknowledgment. The same goes for the following two inequalities
$$(|a|+|b|)^\beta\leq 2^{\beta-1}\left(|a|^\beta+|b|^\beta\right)\ \ \mbox{if $\beta>1$,}\ \ (|a|+|b|)^\beta\leq |a|^\beta+|b|^\beta\ \ \mbox{if $0<\beta\leq1$.}$$
When no confusion arises, we suppress the dependence of a function on its variables.  As usual, the letter $c$ is used to denote a generic positive constant.
 
This paper is organized as follows. In Section 2 we collect a few known results that have played essential roles in our development. Section 3 is devoted to the study of approximate problems. It is interesting to note that our approximation scheme is designed in such a way that the global existence of a solution holds.  The proof of Theorem \ref{thm1} is presented in Section 4.
\section{Preliminaries}
In this section we collect a few known results that are useful to us.

Our existence theorem is based upon the following fixed point theorem, which is often called the Leray-Schauder fixed point lemma (\cite{GT}, p.280).
\begin{lemma}\label{lsf}
	Let $B$ be a map from a Banach space $\mathcal{B}$ into itself. Assume:
	\begin{enumerate}
		\item[(LS1)] $B$ is continuous;
		\item[(LS2)] the images of bounded sets of $B$ are precompact;
		\item[(LS3)] there exists a constant $c$ such that
		$$\|z\|_{\mathcal{B}}\leq c$$
		for all $z\in\mathcal{B}$ and $\sigma\in[0,1]$ satisfying $z=\sigma B(z)$.
	\end{enumerate}
	Then $B$ has a fixed point.
\end{lemma}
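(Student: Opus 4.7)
The plan is to deduce Lemma \ref{lsf} from Schauder's fixed point theorem, which asserts that a continuous compact self-map of a nonempty closed convex bounded subset of a Banach space admits a fixed point. The hypothesis (LS3) only gives an a priori bound on fixed points of the homotopy $z=\sigma B(z)$; it does not by itself tell us that $B$ sends some ball into itself. The classical device used to bridge this gap is a radial retraction.

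First I would fix a radius $R>c$, with $c$ as in (LS3), and define the radial retraction $\rho_R:\mathcal{B}\to\overline{B_R(0)}$ by $\rho_R(z)=z$ when $\|z\|_{\mathcal{B}}\leq R$ and $\rho_R(z)=Rz/\|z\|_{\mathcal{B}}$ otherwise. This map is continuous and norm-nonincreasing. I would then introduce the composite
$$T_R:\overline{B_R(0)}\to\overline{B_R(0)},\qquad T_R(z)=\rho_R(B(z)).$$
By (LS1) and continuity of $\rho_R$, the map $T_R$ is continuous; by (LS2), $T_R$ sends bounded sets into precompact sets, since $\rho_R$ is continuous and sends precompact sets to precompact sets. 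The closed ball $\overline{B_R(0)}$ is nonempty, closed, convex, and bounded, so Schauder's fixed point theorem yields some $z^\ast\in\overline{B_R(0)}$ with $z^\ast=\rho_R(B(z^\ast))$.

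Next I would split into two cases. If $\|B(z^\ast)\|_{\mathcal{B}}\leq R$, then $\rho_R(B(z^\ast))=B(z^\ast)$, so $z^\ast=B(z^\ast)$ and we are done. If instead $\|B(z^\ast)\|_{\mathcal{B}}>R$, then $z^\ast=\sigma B(z^\ast)$ with $\sigma=R/\|B(z^\ast)\|_{\mathcal{B}}\in(0,1)$, so $z^\ast$ falls into the class controlled by (LS3); hence $\|z^\ast\|_{\mathcal{B}}\leq c<R$. On the other hand, in this case $\|z^\ast\|_{\mathcal{B}}=\|\rho_R(B(z^\ast))\|_{\mathcal{B}}=R$, which is a contradiction. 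So only the first case can occur, and $B$ has a fixed point.

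The substantive input is Schauder's theorem itself, whose proof via finite-dimensional approximations of the compact map and an appeal to Brouwer's fixed point theorem is the genuinely nontrivial step; the main obstacle in the argument above is simply to choose $R$ correctly, which is dictated precisely by (LS3). Since the paper cites \cite{GT} for this lemma, nothing more than the sketch above of this topological reduction is required.
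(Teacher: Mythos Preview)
Your argument is correct and is the standard reduction of the Leray--Schauder alternative to Schauder's fixed point theorem via the radial retraction onto a closed ball of radius $R>c$. The paper itself does not supply a proof of this lemma at all; it simply states the result and cites \cite{GT}, p.~280, so there is nothing to compare on the level of strategy. Your sketch is exactly the proof one finds in the cited reference, and the two-case analysis (either $\|B(z^\ast)\|\le R$ gives a genuine fixed point, or $\|B(z^\ast)\|>R$ forces $\|z^\ast\|=R$ while (LS3) forces $\|z^\ast\|\le c<R$) is airtight.
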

 \begin{lemma}\label{ynb}
	Let $\{y_n\}, n=0,1,2,\cdots$, be a sequence of positive numbers satisfying the recursive inequalities
	\begin{equation}\label{ynb1}
		y_{n+1}\leq cb^ny_n^{1+\alpha}\ \ \mbox{for some $b>1, c, \alpha\in (0,\infty)$.}
	\end{equation}
	If
	\begin{equation*}
		y_0\leq c^{-\frac{1}{\alpha}}b^{-\frac{1}{\alpha^2}},
	\end{equation*}
	then $\lim_{n\rightarrow\infty}y_n=0$.
\end{lemma}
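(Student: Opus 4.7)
The plan is to prove this by induction, showing that $y_n$ decays geometrically. Concretely, I would set $r = b^{-1/\alpha}$, which satisfies $0 < r < 1$ because $b > 1$ and $\alpha > 0$, and then claim the stronger statement
\begin{equation*}
y_n \leq y_0\, r^n \quad \text{for all } n = 0, 1, 2, \ldots.
\end{equation*}
Once this is proved, the conclusion $y_n \to 0$ is immediate.

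The base case $n = 0$ is trivial. For the inductive step, assuming $y_n \leq y_0 r^n$, I would apply the recursive inequality \eqref{ynb1} to get
\begin{equation*}
y_{n+1} \leq c\, b^n y_n^{1+\alpha} \leq c\, b^n y_0^{1+\alpha} r^{n(1+\alpha)} = \bigl(c\, y_0^\alpha\bigr) \cdot \bigl(b\, r^\alpha\bigr)^n \cdot y_0 r^n.
\end{equation*}
The key algebraic fact is that the choice $r = b^{-1/\alpha}$ was made precisely so that $b\, r^\alpha = 1$, which collapses the middle factor. Thus it remains to show $c\, y_0^\alpha \leq r$, i.e., $y_0^\alpha \leq c^{-1} b^{-1/\alpha}$, i.e., $y_0 \leq c^{-1/\alpha} b^{-1/\alpha^2}$ — which is exactly the hypothesis of the lemma. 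This closes the induction and gives $y_{n+1} \leq y_0 r^{n+1}$.

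There is really no serious obstacle here; the only ``trick'' is guessing the correct geometric rate $r = b^{-1/\alpha}$. One sanity check worth noting is that the threshold $y_0 \leq c^{-1/\alpha} b^{-1/\alpha^2}$ is sharp in the sense that it is exactly the value that makes the inductive bound $y_0 r^n$ propagate; any smaller decay rate would fail to absorb the $b^n$ factor in the recurrence. Since the proof is elementary, I would present it in a few lines and note that the lemma is standard in the De Giorgi iteration literature, invoked later to drive the $L^\infty$ bounds needed for the main existence theorem.
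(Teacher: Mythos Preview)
Your proof is correct and is the standard induction argument for this classical De Giorgi iteration lemma. The paper does not actually prove the lemma; it simply cites \cite{D}, p.~12, where essentially this same argument appears, so there is nothing to compare beyond noting that your write-up matches the standard reference.
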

This lemma can be found in (\cite{D}, p.12).

The following lemma plays a key role in the proof of Theorem \ref{thm1}. It is inspired by a result in \cite{X1}.
\begin{lemma}\label{prop2.2}
	Let $a(\tau)$ be a continuous non-negative function defined on $[0, \tmax]$ for some $\tmax>0$. Suppose that there exist three positive numbers $\vep, \delta, m $ such that
	\begin{equation}\label{f2}
		a(\tau)< \ep e^{ ma^{1+\delta}(\tau)}+m\ \ \mbox{for each $\tau \in[0, \tmax]$}.
	\end{equation}
	Then there exists a positive number $s_0$ determined by $m, \delta$ only such that if \eqref{f2} holds for
	\begin{eqnarray}
		\ep=\ep_0\equiv\frac{1}{m(1+\delta) e^{ms_0^{1+\delta}}s_0^\delta}\label{epz}
	\end{eqnarray} then we have
	\begin{equation}\label{f5}
		a(\tau)< s_0 \ \mbox{for each $\tau \in(0, \tmax]$, provided that $a(0)\leq m$}.
	\end{equation} 
\end{lemma}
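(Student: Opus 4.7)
The strategy is a continuity/barrier argument. I would first pick $s_0 > m$, depending only on $m$ and $\delta$, that satisfies the algebraic inequality
\[
\varepsilon_0 \, e^{m s_0^{1+\delta}} + m \;\leq\; s_0.
\]
Unpacking the definition \eqref{epz} of $\varepsilon_0$, this is equivalent to
\[
m(1+\delta)\, s_0^{\delta}\,(s_0 - m) \;\geq\; 1,
\]
and since the left-hand side tends to $+\infty$ as $s_0 \to \infty$, the existence of such an $s_0 = s_0(m,\delta)$ is immediate (one can take, for instance, any number exceeding $2m + (m(1+\delta))^{-1/\delta}$).

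With $s_0$ fixed in this way, I would exploit continuity of $a$ and the bound $a(0)\leq m < s_0$. Define
\[
\tau^* \;=\; \inf\bigl\{\tau\in[0,T_0]\ :\ a(\tau)\geq s_0\bigr\},
\]
with the convention $\tau^*=+\infty$ if the set is empty. If $\tau^*=+\infty$, then $a(\tau) < s_0$ on $[0,T_0]$ and \eqref{f5} holds. Otherwise, continuity of $a$ forces $a(\tau^*)=s_0$, and applying hypothesis \eqref{f2} at $\tau=\tau^*$ together with the algebraic inequality above yields
\[
s_0 \;=\; a(\tau^*) \;<\; \varepsilon_0 \, e^{m\, a(\tau^*)^{1+\delta}} + m \;=\; \varepsilon_0 \, e^{m s_0^{1+\delta}} + m \;\leq\; s_0,
\]
a strict contradiction, ruling out this case.

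The only slightly delicate point is the motivation for the precise form of $\varepsilon_0$: it is the critical value of $\varepsilon$ at which the graph of $s \mapsto \varepsilon e^{m s^{1+\delta}} + m$ is tangent to the diagonal $s \mapsto s$, the tangent point being $s_0$. Once this is recognized, the problem reduces to the single inequality $m(1+\delta)s_0^\delta(s_0-m)\geq 1$, and the rest of the argument is routine. I do not foresee any serious obstacle beyond verifying that the chosen $s_0$ genuinely depends only on $m$ and $\delta$, which is transparent from that inequality.
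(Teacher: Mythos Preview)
Your argument is correct and is essentially the same continuity/barrier idea the paper uses: the paper introduces $f_\varepsilon(s)=\varepsilon e^{ms^{1+\delta}}-s+m$, observes that the hypothesis says $f_{\varepsilon_0}(a(\tau))>0$, and then uses connectedness of the range of $a$ together with $a(0)\leq m<s_0$ to trap $a$ below $s_0$.

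The one substantive difference is in the \emph{choice} of $s_0$. You take any $s_0>m$ with $m(1+\delta)s_0^\delta(s_0-m)\geq 1$; the paper instead solves the system $f_\varepsilon(s)=f_\varepsilon'(s)=0$ and takes $s_0$ to be the unique root of $\frac{1}{m(1+\delta)s^\delta}-s+m=0$, i.e.\ the tangent point you mention, where your inequality holds with equality. The paper's $s_0$ is thus the \emph{smallest} admissible value, and since $\varepsilon_0(s_0)$ is strictly decreasing in $s_0$, this choice yields the \emph{largest} $\varepsilon_0$. That sharpness matters downstream: in Section~4 the lifespan $T_0$ is defined by $\varepsilon_0=cg^2(T_0)$ with $g$ increasing, so maximizing $\varepsilon_0$ maximizes the guaranteed existence time. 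Your version proves the lemma as stated, but if you want the best quantitative bound on $T_0$ you should take $s_0$ at the tangent point as the paper does. (Minor aside: your parenthetical example $s_0>2m+(m(1+\delta))^{-1/\delta}$ does not obviously satisfy the inequality for all small $m$; safer is $s_0\geq\max\{m+1,(m(1+\delta))^{-1/\delta}\}$, though of course the existence of some admissible $s_0$ is clear since the left side tends to infinity.)
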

\begin{proof}Introduce the function $f_\vep(s)=\ep e^{ms^{1+\delta}}-s+m$ on $[0,\infty)$. 
Then we form the system
\begin{eqnarray}
		f_\vep^\prime(s)=\ep m(1+\delta) e^{ms^{1+\delta}}s^\delta-1&=&0,\label{r11}\\
		f_\vep(s)=\ep e^{ms^{1+\delta}}-s+m&=&0\label{r12}
\end{eqnarray}
and solve it for $(\vep,s)$.
By \eqref{r11}, we have
\begin{equation}\label{r14}
	\vep=\frac{1}{m(1+\delta) e^{ms^{1+\delta}}s^\delta}.\nonumber
\end{equation} Substitute this into \eqref{r12} to derive
\begin{equation}\label{r13}
\frac{1}{ m(1+\delta)s^\delta}-s+m=0.
\end{equation}
Clearly, the left-hand side of \eqref{r13} is a strictly decreasing function on $(0,\infty)$ with $f_\ep(0+)=\infty$ and $\lim_{s\rightarrow\infty}f_\vep(s)=-\infty$.  Thus, \eqref{r13} has a unique solution in $(0,\infty)$. This solution is our  $s_0$, and it depends only on $m,\delta$. Use \eqref{epz} to obtain $\ep_0$. Our assumption implies
\begin{equation}\label{f3}
	f_{\vep_0}(a(\tau))> 0\ \ \mbox{for each $\tau\in [0,\tmax]$.}
\end{equation} We easily infer from \eqref{r11} and \eqref{r12} that 
\begin{equation}\label{f4}
f_{\vep_0}(s_0)=f_{\vep_0}^\prime(s_0)=0.
\end{equation}
 Moreover, $f_{\vep_0}(s)$ is strictly decreasing on $[0, s_0)$ and strictly increasing on $(s_0, \infty)$. Thus, $0$ is the minimum value of $f_{\vep_0}(s)$ on $[0,\infty)$.

	It is easy to see from \eqref{r13} that
	\begin{equation}\label{f6}
		s_0>m.
	\end{equation}
	The range of $a$ is a closed interval because it is a continuous function $[0, \tmax]$, and this interval is either contained in $ [0, s_0)$ or $(s_0, \infty)$ due to \eqref{f3} and \eqref{f4}. The latter cannot occur due to the second inequality in \eqref{f5} and \eqref{f6}.  Thus the lemma follows.
	\end{proof}

\section{Approximate problems}
In this section, we first present our approximation scheme. Then we proceed to prove the existence of a weak solution to our approximate problems. Note that our construction here is entirely different from that in \cite{WXZ} where the authors directly seek $C$ in the function space $Z_M\equiv\{u\in C^{\frac{\beta}{2}, \beta}(\overline{\ot}): \frac{1}{M}\leq u\ \mbox{and}\ \|u\|_{C^{\frac{\beta}{2}, \beta}(\overline{\ot})}\leq M\}$ for some $\beta\in(0,1)$ and $M>1$.

To design our approximation scheme,
 we pick
\begin{equation}\label{tdef}
\tau\in\left(0, 1\right)	
\end{equation}
and define
\begin{eqnarray}
	\kappa_\tau(s)&=&\kappa(s^++\tau),\nonumber\\
		\theta_\tau(s)&=&\left\{\begin{array}{ll}
			\frac{1}{\tau}	 &\mbox{if $s\geq\frac{1}{\tau}$,}\\
	s &\mbox{if $\kappa(s)\leq \frac{1}{\tau}$ \ \ \mbox{for $s\in \mathbb{R}$.}}
			\end{array}\right.\label{thdef}
		\end{eqnarray} 
	We approximate $G(y_1,y_2,y_3)$ by
	\begin{equation}\label{gtdf}
	 G_\tau(y_1,y_2,y_3) =y_1(\theta_\tau(y_2)+\tau)^{-d}e^{\alpha_2y_3}-y_1^{-1}\theta_\tau^{d}(y_2)e^{-\alpha_2y_3} \ \ \mbox{for $(y_1,y_2,y_3)\in(0,\infty)\times[0,\infty)\times\mathbb{R}$}.	
	\end{equation}
Our approximate problems are:
\begin{eqnarray}
	-\mdiv\left[\kact\nabla\pe\right]+\tau\pe&=&\frac{1}{2}\alpha_4H_\tau(h,C^+,\ps-\pe)\chi_{\Omega^\prime}\ \ \mbox{in $\ot$},\label{app1}\\
	-\mdiv\left(\sigma\nabla\ps\right)+\tau\ps&=&-\frac{1}{2}\alpha_4H_\tau(h,C^+,\ps-\pe) \ \mbox{in $\opt$},\label{app2}\\
	\ee\pt C-\mdiv\left(D\nabla C\right)&=&\frac{1}{2}\alpha_3\alpha_4H_\tau(h,C^+,\ps-\pe)\chi_{\Omega^\prime}\ \ \mbox{in $\ot$},\label{app3}\\
	\frac{\partial\pe}{\partial \mathbf{n}}&=&0\ \ \mbox{on $\partial\Omega\times(0,T)$},\label{app4}\\
	\frac{\partial \ps}{\partial \mathbf{n}}&=&0\ \ \mbox{on $\partial\op\times(0,T)$},\label{app5}\\
	\frac{\partial C}{\partial \mathbf{n}}&=&0\ \ \mbox{on $\partial\op\times(0,T)$},\label{app55}\\
	C(x,0)&=& C_0(x)\ \ \mbox{on $\Omega$,}\label{app6}
\end{eqnarray} 
where
\begin{equation}\label{htdf}
	H_\tau(y_1,y_2,y_3)=\sqrt{\theta_\tau(y_2)}G_\tau(y_1,y_2,y_3).
\end{equation}

\begin{theorem}\label{thm2}
	Let (H1)-(H7) be satisfied. For each $T>0$ there is a weak solution $(\pe,\ps,C)$ in the space $ X_T\cap Y_T$
	to problem \eqref{app1}-\eqref{app6}, where $ X_T$ and $Y_T$ are defined in \eqref{xdef} and \eqref{ydef}, respectively.
\end{theorem}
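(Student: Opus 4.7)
The plan is to apply the Leray-Schauder fixed point lemma (Lemma~\ref{lsf}). The regularization in \eqref{app1}--\eqref{app6} is designed so that, for each fixed $\tau\in(0,1)$, every nonlinear coefficient is under control: $\kappa_\tau(\bar C)\ge\kappa(\tau)>0$ is bounded on both sides; the truncation $\theta_\tau\le 1/\tau$ makes the prefactor $\sqrt{\theta_\tau(C^+)}$ in $H_\tau$ bounded; and the lower-order terms $\tau\pe,\tau\ps$ furnish $L^2$-coercivity that sidesteps the zero-mean obstruction of the pure Neumann problem. I would take $\mathcal{B}=L^2(\ot)$ and define $B:\mathcal{B}\to\mathcal{B}$ in two stages. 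Given $\bar C\in\mathcal{B}$, for a.e.\ $t\in(0,T)$ first solve the elliptic subsystem \eqref{app1}--\eqref{app2} with $\bar C$ frozen in $\kappa_\tau$ and in $H_\tau$ for the pair $(\pe(\cdot,t),\ps(\cdot,t))\in W^{1,2}(\Omega)\times W^{1,2}(\op)$; then substitute this pair into the linear parabolic equation \eqref{app3} and solve for $C$, setting $B(\bar C):=C$.

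The elliptic step hinges on \eqref{gpr}: $H_\tau$ is strictly increasing in $\ps-\pe$ and enters the two equations with opposite signs, so the associated nonlinear operator on $W^{1,2}(\Omega)\times W^{1,2}(\op)$ is monotone; combined with the strict coercivity from the added $\tau\pe,\tau\ps$ terms, the Minty-Browder theory yields a unique weak solution, after first regularizing the exponentials (e.g.\ replacing $e^{\alpha_2 y_3}$ by $\min\{e^{\alpha_2 y_3},n\}$) and passing to the limit $n\to\infty$ using the $L^\infty$ bounds described below. Once $(\pe,\ps)\in L^\infty\times L^\infty$, the right-hand side of \eqref{app3} is bounded in $L^\infty(\ot)$ and depends Lipschitz-continuously on $C$, so standard linear parabolic theory gives $C\in L^\infty(\ot)\cap L^2(0,T;W^{1,2}(\Omega))$ with $\pt C\in L^2(0,T;W^{1,2}(\Omega)^*)$. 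Continuity of $B$ (LS1) follows by subtracting two instances and invoking monotonicity of $H_\tau$ together with Lipschitz dependence of the coefficients on $\bar C$; compactness (LS2) follows from the Aubin-Lions lemma, since $B(\bar C)$ lies in a bounded subset of $L^2(0,T;W^{1,2}(\Omega))$ whose time derivative lies in $L^2(0,T;W^{1,2}(\Omega)^*)$, which embeds compactly in $L^2(\ot)$. For the a priori bound (LS3), any solution of $C=\sigma B(C)$ with $\sigma\in[0,1]$ satisfies the system with $\sigma$-scaled source; the elliptic $L^\infty$ machinery still bounds $(\pe,\ps)$ in terms of $\|C\|_{L^\infty}$, and testing \eqref{app3} with truncations of $C$, using the uniform estimate $\sqrt{\theta_\tau(C^+)}\le\tau^{-1/2}$, yields a Gronwall-type bound $\|C\|_{L^2(\ot)}\le c(\tau,T)$ independent of $\sigma$.

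The main technical difficulty is obtaining $L^\infty$ control of the elliptic pair $(\pe,\ps)$, because $H_\tau$ grows exponentially in $\ps-\pe$ and the natural energy identity alone does not suffice. I would overcome this by De Giorgi iteration on the level sets of each variable separately: testing the $\pe$-equation with $(\pe-k)^+$ for large $k$, the source $H_\tau$ is pushed toward its negative branch as $\pe$ grows (because $H_\tau$ is monotonically decreasing in $\pe$ with $\ps$ fixed), so the reaction term actively absorbs the level-set energy once $k$ exceeds a $\tau$-dependent threshold determined by $\|h\|_\infty$ and $\|\bar C\|_\infty$; the symmetric arguments with $(-\pe-k)^+$, $(\ps-k)^+$ and $(-\ps-k)^+$ bound the remaining tails. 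Iterating via Lemma~\ref{ynb} produces $L^\infty$ bounds on $(\pe,\ps)$ that are independent of the preliminary exponential truncation, which closes the Minty-Browder step and completes the verification of all three Leray-Schauder hypotheses.
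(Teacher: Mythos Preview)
Your overall architecture---solve the elliptic pair for frozen $\bar C$, feed into the parabolic equation, close a fixed point---matches the paper, and Minty--Browder for the elliptic step is a reasonable alternative to the paper's inner Leray--Schauder loop. But there is a genuine gap in your $L^\infty$ control of $(\pe,\ps)$, and it propagates into (LS3).

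Your De Giorgi argument treats each potential separately: you test the $\pe$-equation with $(\pe-k)^+$ and claim the reaction term ``absorbs'' once $k$ exceeds a threshold depending only on $\|h\|_\infty$ and $\|\bar C\|_\infty$. This is not true. For $H_\tau(h,C,\ps-\pe)(\pe-k)^+$ to have a good sign you need $\pe\ge\ps+\text{const}(\tau,h,C)$ on the level set, i.e.\ the threshold for $\pe$ depends on $\|\ps\|_\infty$, which you do not yet know; the symmetric argument for $\ps$ needs $\|\pe\|_\infty$. The circularity is real. The paper breaks it by testing \emph{both} equations at once---with $((\pe)^+)^n$ and $((\ps)^+)^n$ respectively---and adding: the coupled reaction contributes $-\frac{\alpha_4}{2}\int\sqrt{\theta_\tau}[G_\tau(h,C,\ps-\pe)-G_\tau(h,C,0)][((\ps)^+)^n-((\pe)^+)^n]$, which is $\le 0$ by monotonicity of $G_\tau$ in $y_3$, leaving only the bounded remainder $H_\tau(h,C,0)$. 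A Moser iteration then gives $\|\pe\|_\infty+\|\ps\|_\infty\le\frac{\alpha_4}{\tau}\|H_\tau(h,C^+,0)\|_\infty$. You also miss a key consequence of the truncation $\theta_\tau\le 1/\tau$: $H_\tau(h,C^+,0)$ is bounded by a constant depending only on $\tau$ and $\|h\|_\infty$, \emph{independent of $C$}. Thus the elliptic $L^\infty$ bound does not depend on $\|C\|_{L^\infty}$ at all; this is what makes the paper's (LS3) close cleanly (the parabolic source is then bounded in $L^\infty(\ot)$ by $c(\tau)$, and a De Giorgi iteration on $C$ finishes). Your (LS3), by contrast, bounds $(\pe,\ps)$ ``in terms of $\|C\|_{L^\infty}$'' and then tries to Gronwall in $L^2$---but the source still carries $e^{\pm\alpha_2(\ps-\pe)}$, which in your accounting depends exponentially on $\|C\|_{L^\infty}$, so the loop does not close in the $L^2$ framework. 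A secondary issue: with $\mathcal{B}=L^2(\ot)$, the coefficient $\kappa_\tau(\bar C)=\kappa(\bar C^++\tau)$ need not lie in $L^\infty$ (H1 gives no upper growth bound on $\kappa$), so even the elliptic bilinear form may fail to be bounded on $W^{1,2}$; the paper sidesteps this by taking $\mathcal{B}=C(\overline{\ot})$.
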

The proof of this theorem is divided into a couple of lemmas.
\begin{lemma}
	Let $C=C(x,t)\in  C(\overline{\ot})$ be given.
	For each  $t\in[0,T]$ there is a unique weak solution $(\vp,\psi)=(\vp(x,t),\psi(x,t))$ in $\left(C(\overline{\Omega})\cap W^{1,2}(\Omega)\right)^2$ to the boundary value problem
	\begin{eqnarray}
		-\mdiv\left(\kact\nabla\vp\right)+\tau\vp&=&\frac{1}{2}\alpha_4H_\tau(h,C^+,\psi-\vp)\chi_{\Omega^\prime}\ \ \mbox{in $\Omega$},\label{lsvp1}\\
			-\mdiv\left(\sigma\nabla\psi\right)+\tau\psi&=&-\frac{1}{2}\alpha_4H_\tau(h,C^+,\psi-\vp)\ \ \mbox{in $\Omega^\prime$},\label{lspsi1}\\
			\frac{\partial \vp}{\partial \mathbf{n}}&=& 0\ \ \mbox{on $\po$},\label{lsvp2}\\
		\frac{\partial \psi}{\partial \mathbf{n}}&=& 0\ \ \mbox{on $\partial\Omega^\prime$}.\label{lspsi2}
	\end{eqnarray}
\end{lemma}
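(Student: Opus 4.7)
The plan is to apply the Leray--Schauder fixed point lemma (Lemma~\ref{lsf}). Fix $t \in [0,T]$ and regard all coefficients as functions of $x$. Since $C\in C(\overline{\ot})$ is bounded and $\kappa$ is continuous and strictly positive on the compact interval $[\tau, \|C\|_\infty + \tau]$, $\kact = \kappa(C^+ + \tau)$ is bounded above and below by positive constants; similarly $\theta_\tau(C^+) \leq 1/\tau$ and $h \in [1/K, K]$. Let $\mathcal{B} = C(\overline{\Omega}) \times C(\overline{\op})$, and define $B: \mathcal{B} \to \mathcal{B}$ by $B(\tilde\vp, \tilde\psi) = (\vp, \psi)$, where $(\vp, \psi)$ solves the two decoupled linear Neumann problems obtained by freezing the nonlinearity at $(\tilde\vp, \tilde\psi)$. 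Because the frozen source $H_\tau(h, C^+, \tilde\psi - \tilde\vp)$ lies in $L^\infty$, each problem is uniquely solvable in $W^{1,2}$ by Lax--Milgram, and the De Giorgi--Nash--Moser regularity theorem (with (H5) ensuring Lipschitz boundary) lifts the solutions to $C^\beta$ for some $\beta\in(0,1)$. Continuity of $B$ on $\mathcal{B}$ follows from the uniform continuity of $H_\tau$ on bounded sets combined with the continuous dependence of linear Neumann solutions on their data; compactness (LS2) is the compact embedding $C^\beta \hookrightarrow C^0$.

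The essential step is the a priori bound (LS3) for a fixed point of $\lambda B$, $\lambda\in[0,1]$. Testing \eqref{lsvp1} by $\vp$ and \eqref{lspsi1} by $\psi$ and summing yields
\[
\int_{\Omega}\kact|\nabla\vp|^2 + \int_{\op}\sigma|\nabla\psi|^2 + \tau\int_\Omega\vp^2 + \tau\int_{\op}\psi^2 = -\frac{\lambda\alpha_4}{2}\int_{\op}H_\tau(h, C^+, \psi-\vp)(\psi-\vp).
\]
The monotonicity \eqref{gpr} gives $H_\tau(y)y \geq H_\tau(0)y$ pointwise, and $|H_\tau(\cdot, \cdot, 0)|$ is bounded by a constant depending only on $\tau, K, d$; Young's inequality followed by absorption into the $\tau$-terms therefore produces a $\lambda$-uniform $W^{1,2}$ bound on $(\vp, \psi)$. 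Upgrading to an $L^\infty$ bound proceeds by a coupled De Giorgi truncation: testing the $\vp$-equation with $(\vp - k)^+$ and the $\psi$-equation with $(\psi - k)^+$ (together with the symmetric tests that bound below). Writing $H_\tau(\psi - \vp) = [H_\tau(\psi - \vp) - H_\tau(\psi - k)] + H_\tau(\psi - k)$ on $\{\vp > k\}$, the first bracket is non-positive by the monotonicity of $H_\tau$, so the exponentially growing contribution has favorable sign; the remainder, controlled by $H_\tau(\psi - k)$ on the level set, is handled by Sobolev embedding against the previously secured $W^{1,2}$ bound. Lemma~\ref{ynb} then drives the measures of the level sets to zero along a geometric sequence of truncation heights, yielding the required $L^\infty$ bound. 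The main obstacle is precisely the exponential growth of $H_\tau$ in $\psi - \vp$, which rules out a naive Moser bootstrap; circumventing it relies essentially on the monotone structure encoded in \eqref{gpr}.

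Uniqueness is immediate: given two solutions $(\vp_i, \psi_i)$, $i = 1, 2$, subtracting, testing by the difference $(\vp_1-\vp_2, \psi_1-\psi_2)$, and applying $(H_\tau(y_1) - H_\tau(y_2))(y_1-y_2)\geq 0$ give
\[
\int_\Omega\kact|\nabla(\vp_1-\vp_2)|^2 + \int_{\op}\sigma|\nabla(\psi_1-\psi_2)|^2 + \tau\int_\Omega(\vp_1-\vp_2)^2 + \tau\int_{\op}(\psi_1-\psi_2)^2 \leq 0,
\]
whence $\vp_1 = \vp_2$ and $\psi_1 = \psi_2$.
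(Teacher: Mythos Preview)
Your overall architecture matches the paper's: Leray--Schauder on a space of continuous functions, the map defined by freezing the nonlinearity and solving two decoupled linear Neumann problems, $C^\beta$ regularity for compactness, and the uniqueness argument via monotonicity is exactly the paper's. The point of departure is how you obtain the uniform $L^\infty$ bound in (LS3), and there your De Giorgi step has a real gap.

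After testing the $\vp$-equation with $(\vp-k)^+$ you decompose $H_\tau(\psi-\vp)=[H_\tau(\psi-\vp)-H_\tau(\psi-k)]+H_\tau(\psi-k)$ on $\{\vp>k\}$. The bracket is indeed $\le 0$, but the remainder $H_\tau(\psi-k)$ still contains $e^{\alpha_2\psi}$, and a $W^{1,2}$ bound on $\psi$ gives no control on exponentials when $N\ge 3$ (Sobolev only yields $L^{2N/(N-2)}$). The analogous treatment of the $\psi$-equation leaves $e^{-\alpha_2\vp}$ in the remainder. So the ``handled by Sobolev embedding'' claim fails, and the iteration does not close.

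The fix is to add the two tested equations \emph{before} decomposing, and to center the decomposition at $0$ rather than at $\psi-k$: the combined right-hand side is $-\tfrac{\lambda\alpha_4}{2}\int_{\op}H_\tau(\psi-\vp)\bigl[(\psi-k)^+-(\vp-k)^+\bigr]$, and since both $y_3\mapsto H_\tau(\cdot,\cdot,y_3)$ and $s\mapsto(s-k)^+$ are nondecreasing, $[H_\tau(\psi-\vp)-H_\tau(0)]\bigl[(\psi-k)^+-(\vp-k)^+\bigr]\ge 0$; what survives is controlled by the \emph{bounded} quantity $|H_\tau(h,C^+,0)|$. The paper in fact bypasses De Giorgi entirely: it tests with $(\vp^+)^n$ and $(\psi^+)^n$, adds, uses the same monotonicity trick to reduce to $H_\tau(\cdot,\cdot,0)$, and then the regularizing $\tau$-term gives directly $\tau\|w\|_{n+1}^{n+1}\le \alpha_4\|H_\tau(h,C^+,0)\|_{n+1}\|w\|_{n+1}^n$ with $w=\max\{\vp^+,\psi^+\chi_{\op}\}$, hence $\|w\|_{n+1}\le (\alpha_4/\tau)\|H_\tau(h,C^+,0)\|_{n+1}$; letting $n\to\infty$ yields the $L^\infty$ bound with no iteration and no preliminary $W^{1,2}$ estimate.
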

\begin{proof}We first establish the uniqueness assertion. Suppose that there exist two weak solutions, say, $(\vp_1,\psi_1), (\vp_2,\psi_2)$. Then we have
	\begin{eqnarray}
		\lefteqn{	-\mdiv\left(\kact\nabla(\vp_1-\vp_2)\right)+\tau(\vp_1-\vp_2)}\nonumber\\
		&=&\frac{1}{2}\alpha_4\sqrt{\theta_\tau(C^+)}\left(G_\tau(h, \thc,\psi_1-\vp_1)-G_\tau(h, \thc,\psi_2-\vp_2)\right)\chi_{\Omega^\prime}\ \ \mbox{in $\Omega$},\label{lsvp3}\\
		\lefteqn{	-\mdiv\left(\sigma\nabla(\psi_1-\psi_2)\right)+\tau(\psi_1-\psi_2)}\nonumber\\
		&=&-\frac{1}{2}\alpha_4\sqrt{\theta_\tau(C^+)}\left(G_\tau(h, \thc,\psi_1-\vp_1)-G_\tau(h, \thc,\psi_2-\vp_2)\right)\ \ \mbox{in $\Omega^\prime$}.\label{lspsi3}
	\end{eqnarray}
Note from \eqref{gtdf} that for each fixed $\tau$ we have
$$G_\tau(h, \thc,\vp)\in L^\infty(\op)\ \ \mbox{whenever $\vp\in L^\infty(\op)$.}$$
Use $\vp_1-\vp_2$ as a test function in \eqref{lsvp3}, $\psi_1-\psi_2$ in \eqref{lspsi3}, then add up the two resulting equations to derive
	\begin{eqnarray}
		\lefteqn{\io \kact|\nabla(\vp_1-\vp_2)|^2dx+\tau\io|\vp_1-\vp_2|^2dx}\nonumber\\
		&&+\iop\sigma|\nabla(\psi_1-\psi_2)|^2dx+\tau\iop|\psi_1-\psi_2|^2dx\nonumber\\
		&=&-\frac{1}{2}\alpha_4\iop \sqrt{\theta_\tau(C^+)}\left(G_\tau(h, \thc,\psi_1-\vp_1)-G_\tau(h, \thc,\psi_2-\vp_2)\right)\left[(\psi_1-\vp_1)-(\psi_2-\vp_2)\right]dx\nonumber\\
		&\leq& 0.\nonumber
	\end{eqnarray}
	The last step is due to \eqref{gpr}.
	Thus,
	$$\psi_1=\psi_2,\ \ \vp_1=\vp_2.$$

	For the existence, we define an operator $\mathbb{T}$ from $\mathbb{B}\equiv L^\infty(\op)\times L^\infty(\Omega)$ into itself as follows: Given $(u, v)\in \mathbb{B}$, we say $\mathbb{T}(u, v)=(\psi,\vp)$ if $\psi,\vp$ are the respective solutions of the following two problems
	\begin{eqnarray}
		-\mdiv\left(\kact\nabla\vp\right)+\tau\vp&=&\frac{1}{2}\alpha_4H_\tau(h, \thc,u-v)\chi_{\Omega^\prime}\ \ \mbox{in $\Omega$},\label{lsvp4}\\
		\frac{\partial \vp}{\partial \mathbf{n}}&=& 0\ \ \mbox{on $\po$},\label{lsvp5}\\
		-\mdiv\left(\sigma\nabla\psi\right)+\tau\psi&=&-\frac{1}{2}\alpha_4H_\tau(h, \thc,u-v)\ \ \mbox{in $\Omega^\prime$},\label{lspsi6}\\
		\frac{\partial \psi}{\partial \mathbf{n}}&=& 0\ \ \mbox{on $\partial\Omega^\prime$}.\label{lspsi7}
	\end{eqnarray}
	Observe from our assumptions that $\kact$ is bounded above and bounded away from $0$ below and the right hand side term in \eqref{lsvp4} lies in $L^\infty(\Omega)$. We can easily conclude from classical theory for linear elliptic equations that problem \eqref{lsvp4}-\eqref{lsvp5}  has a unique weak solution $\vp$  in $W^{1,2}(\Omega)\cap C^\beta(\overline{\Omega})$  for some $\beta\in (0,1)$.  Similarly, problem \eqref{lspsi6}-\eqref{lspsi7} has a unique weak solution $\psi$ in $W^{1,2}(\Omega^\prime)\cap C^\beta(\overline{\Omega^\prime})$.
	Therefore, the operator $\mathbb{T}$ is well-defined. Obviously, any fixed point of $\mathbb{T}$ is a weak solution to problem \eqref{lsvp1}-\eqref{lspsi2}. The existence of such a fixed point will be established by applying Lemma \ref{lsf}, the Leray-Schauder fixed point lemma. To this end,  we can infer from the previously-mentioned classical results that $\mathbb{T}$ is continuous and maps bounded sets into precompact ones. We still need to show that
	there is a positive number $c$ such that
	\begin{equation}\label{vp6}
		\|(v\chi_{\Omega^\prime},u)\|_{\infty,\Omega}\leq c
	\end{equation}
	for all $(u,v)\in \mathbb{B}$ and $\delta\in (0,1)$ satisfying $(u,v)=\delta\mathbb{T}(u,v)$. The preceding equation is equivalent to 
	\begin{eqnarray}
		-\mdiv\left(\kact\nabla u\right)+\tau u&=&\frac{1}{2}\delta \alpha_4H_\tau(h,C^+,v-u)\chi_{\Omega^\prime}\ \ \mbox{in $\Omega$},\label{vp7}\\
		\frac{\partial u}{\partial \mathbf{n}}&=& 0\ \ \mbox{on $\po$},\label{vp8}\\
		-\mdiv\left(\sigma\nabla v\right)+\tau v&=&- \frac{1}{2}\delta\alpha_4H_\tau(h,C^+,v-u)\chi_{\Omega^\prime}\ \ \mbox{in $\Omega^\prime$},\label{psi9}\\
		\frac{\partial v}{\partial \mathbf{n}}&=& 0\ \ \mbox{on $\partial\Omega^\prime$}.\label{psi10}
	\end{eqnarray}
	To see \eqref{vp6}, we use $\left(u^+\right)^n$, where $n\geq 1$, as a test function in \eqref{vp7} and keep in mind \eqref{vp8} to get
	\begin{equation}\label{sm1}
		\tau\io \left(u^+\right)^{n+1}dx\leq \frac{1}{2}\delta \alpha_4\iop H_\tau(h,C^+,v-u)\left(u^+\right)^ndx.
	\end{equation}
	Similarly, we can infer from \eqref{psi9} and \eqref{psi10} that
	\begin{equation}
		\tau\iop\left(v^+\right)^{n+1} dx\leq -\frac{1}{2}\delta \alpha_4\iop H_\tau(h,C^+,v-u)\left(v^+\right)^ndx.\nonumber
	\end{equation}
	Add this to \eqref{sm1} to derive
	\begin{eqnarray}
		\lefteqn{	\tau\io\left(u^+\right)^{n+1}dx+\tau\iop\left(v^+\right)^{n+1}  dx}\nonumber\\
		&\leq&-\frac{1}{2}\delta\alpha_4\iop H_\tau(h, \thc, v- u) \left( \left(v^+\right)^n-\left(u^+\right)^n\right)dx\nonumber\\
		&=&-\frac{1}{2}\delta\alpha_4\iop \sqrt{\theta_\tau(C^+)}\left[G_\tau(h, \thc, v- u)-G_\tau(h, \thc,0)\right] \left( \left(v^+\right)^n-\left(u^+\right)^n\right)dx\nonumber\\
		&&+\frac{1}{2}\delta\alpha_4\iop H_\tau(h, \thc,0)\left( \left(v^+\right)^n-\left(u^+\right)^n\right)dx.\label{uve2}
	\end{eqnarray}
	Recall from \eqref{gpr} that $G_\tau(y_1,y_2,y_3)$ is increasing in $y_3$. Thus,
	\begin{equation}
		\left[G_\tau(h, \thc, v- u)-G_\tau(h, \thc,0)\right] \left( \left(v^+\right)^n-\left(u^+\right)^n\right)\geq 0\ \ \mbox{in $\op$.}\nonumber
	\end{equation}
	Use this in \eqref{uve2} to derive
	\begin{eqnarray}
		\lefteqn{	\tau\io\left(u^+\right)^{n+1}dx+\tau\iop\left(v^+\right)^{n+1}  dx}\nonumber\\
		&\leq&\frac{1}{2}\alpha_4\iop \sqrt{\theta_\tau(C^+)}\left|G_\tau(h, \thc,0)\right|\left( \left(v^+\right)^n+\left(u^+\right)^n\right)dx\nonumber\\
		&\leq& \alpha_4\iop \sqrt{\theta_\tau(C^+)}\left|G_\tau(h, \thc,0)\right|w^ndx,\nonumber
	\end{eqnarray}
	where
	\begin{equation}
		w=\max\{u^+, v^+\chi_{\Omega^\prime}\}.\nonumber
	\end{equation}
	Consequently,
	\begin{eqnarray}
		\tau\io w^{n+1}dx&\leq &\tau\io\left(u^+\right)^{n+1}dx+\tau\iop\left(v^+\right)^{n+1}  dx\nonumber\\
		&\leq&\alpha_4\iop \sqrt{\theta_\tau(C^+)}\left|G_\tau(h, \thc,0)\right|w^ndx\nonumber\\
		&\leq& \alpha_4\|w\|_{n+1,\Omega}^n\|H_\tau(h, \thc,0)\|_{n+1,\Omega},\nonumber
	\end{eqnarray}
	from whence follows
	$$\|w\|_{n+1,\Omega}\leq \frac{\alpha_4}{\tau}\|H_\tau(h, \thc,0)\|_{n+1,\Omega}.$$
	Taking $n\ra\infty$ yields 
	\begin{equation}\label{uve11}
		\max\{\|u^+\|_{\infty,\Omega}, \|v^+\|_{\infty,\op}\}\leq \frac{\alpha_4}{\tau}\|H_\tau(h, \thc,0)\|_{\infty,\Omega}.
	\end{equation}
	In an entirely similar manner, we can also prove
	\begin{equation}\label{uve12}
		\max\{\|u^-\|_{\infty,\Omega}, \|v^-\|_{\infty,\op}\}\leq \frac{\alpha_4}{\tau}\|H_\tau(h, \thc,0)\|_{\infty,\Omega}.
	\end{equation}
	Combining this with \eqref{uve11} yields \eqref{vp6}. The proof is complete.
\end{proof}

To continue the proof of Theorem \ref{thm2}, we define an operator $\mathbb{A}$ from $ C(\overline{\ot})$ into $ C(\overline{\ot})$ as follows: For each $C\in  C(\overline{\ot})$, we first solve system \eqref{app1}-\eqref{app2} coupled with boundary conditions \eqref{app4} and \eqref{app5} for $(\pe,\ps)$. Then use the pair to form the problem
\begin{eqnarray}
	\ee\pt B-\mdiv\left(D\nabla  B\right)&=& \frac{1}{2}\alpha_3\alpha_4 H_\tau(h,C^+,\ps-\pe)\ \ \mbox{in $\ot$,}\label{b1}\\
	\frac{\partial B}{\partial \mathbf{n}}&=& 0\ \ \mbox{on $\po\times(0,T)$,}\label{b2}\\
	B(x,0)&=& C_0(x)\ \ \mbox{on $\Omega$.}\label{b3}
\end{eqnarray}
 Classical regularity for linear parabolic equations (\cite{LSU}, Chap.III) asserts that problem \eqref{b1}-\eqref{b3} has a unique weak solution $B$ in the space $C([0,T]; L^2(\Omega))\cap L^2(0,T; W^{1,2}(\Omega))$. Moreover, $B$ is H\"{o}lder continuous on $\overline{\ot}$. We define  $B=\mathbb{A}(C)$. It is easy to see that $\mathbb{A}$ is well-defined and continuous and maps bounded sets into precompact ones. To be able to use Lemma \ref{lsf},  we still need to show that 
\begin{equation}\label{cbd2}
	\|C\|_{\infty,\ot}\leq c
\end{equation}
 for all $\delta\in [0,1]$ and $C\in C(\overline{\ot})$ satisfying $C=\delta \mathbb{A}(C)$. This equation is equivalent to the problem
\begin{eqnarray}
	-\mdiv\left[\kact\nabla\pe\right]+\tau\pe&=&\frac{1}{2}\alpha_4H_\tau(h,C^+,\ps-\pe)\chi_{\Omega^\prime}\ \ \mbox{in $\ot$},\label{lapp1}\\
	-\mdiv\left(\sigma\nabla\ps\right)+\tau\ps&=&-\frac{1}{2}\alpha_4H_\tau(h,C^+,\ps-\pe) \ \mbox{in $\opt$},\label{lapp2}\\
	\ee\pt C-\mdiv\left(D\nabla C\right)&=&\frac{1}{2}\alpha_3\alpha_4\delta H_\tau(h,C^+,\ps-\pe)\chi_{\Omega^\prime}\ \ \mbox{in $\ot$},\label{lapp3}\\
	\frac{\partial\pe}{\partial \mathbf{n}}=\frac{\partial C}{\partial \mathbf{n}}&=&0\ \ \mbox{on $\partial\Omega\times(0,T)$},\label{lapp4}\\
	\frac{\partial \ps}{\partial \mathbf{n}}&=&0\ \ \mbox{on $\partial\op\times(0,T)$},\label{lapp5}\\
	C(x,0)&=& \delta C_0(x)\ \ \mbox{on $\Omega$.}\label{lapp6}
\end{eqnarray}

	To see \eqref{cbd2}, we use $C^-$ as a test function in \eqref{lapp3} to get
	$$-\frac{1}{2}\frac{d}{dt}\io\ee(C^-)^2dx-\io D|\nabla C^-|^2dx=0,$$
	from whence follows
	\begin{equation}
		C\geq 0\ \ \mbox{a.e. on $\ot$.}\nonumber
	\end{equation}
Thus, we can replace $C^+$ in \eqref{lapp1}-\eqref{lapp3} by $C$. We will do this in the subsequent calculations without acknowledgment.

An upper bound for $C$ is established in the following lemma.
\begin{lemma}\label{lcub}For each $q>1+\frac{N}{2}$ there is a positive number $c$ depending on $N, q, \Omega$ and the given data in system \eqref{cpe}-\eqref{cc} such that
	\begin{equation}
		\max_{\ot} C\leq 	c(1+T)^{\frac{2N}{(N+2)}}T^{\frac{2(2q-2-N)}{q(N+2)}}\left\|e^{\alpha_2(\ps-\pe)}\right\|_{q,\opt}^{2}+4\|C_0\|_{\infty,\Omega}+2.\nonumber
	\end{equation}
\end{lemma}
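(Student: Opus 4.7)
The plan is to run a De~Giorgi truncation argument on the parabolic equation \eqref{lapp3}. I fix a level $k \geq k_0 := 2\|C_0\|_{\infty,\Omega}+2$, arranged large enough that $k\geq\max(1,1/(2d-1))$, and test \eqref{lapp3} against $(C-k)^+$. Since $\delta \in [0,1]$ the initial contribution $(\delta C_0 - k)^+$ vanishes, and I obtain the energy inequality
\begin{equation*}
Y(k):=\sup_{t\leq T}\int_\Omega\ee[(C-k)^+]^2(t)\,dx + \int_0^T\int_\Omega D|\nabla(C-k)^+|^2\,dxdt
\leq c\int_0^T\int_{\op}H_\tau(h,C,\ps-\pe)(C-k)^+\,dxdt
\end{equation*}
with a constant $c$ independent of $\tau$.

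The decisive preliminary step is a uniform-in-$\tau$ pointwise bound on $H_\tau^+$ on $\{C>k\}$, where hypothesis (H8) is essential. The function $g_\tau(s):=\sqrt{s}(s+\tau)^{-d}$ attains its maximum at $s=\tau/(2d-1)$ and is decreasing beyond that point; case-splitting according to whether $\theta_\tau(C)=C$ or $\theta_\tau(C)=1/\tau$, one checks that $g_\tau(\theta_\tau(C))\leq 1$ on $\{C>k\}$ uniformly in $\tau\in(0,1)$. Combined with \eqref{hb} and discarding the sign-favourable negative part of $H_\tau$, this yields $H_\tau(h,C,\ps-\pe)\chi_{\{C>k\}}\leq K\, e^{\alpha_2(\ps-\pe)}$. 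I then apply H\"older's inequality with conjugate exponents $q$ and $r:=q/(q-1)$, bound $\|(C-k)^+\|_{L^r(\opt)}\leq |A_k|^{1/r-1/r^*}\|(C-k)^+\|_{L^{r^*}(\opt)}$ on the level set $A_k:=\{(x,t)\in\opt:C(x,t)>k\}$, and invoke the standard parabolic embedding
\begin{equation*}
\|u\|_{L^{r^*}(\opt)}\leq c(1+T)^{N/(2(N+2))}\bigl(\sup_{t\leq T}\|u(t)\|_{2,\Omega}^2+\|\nabla u\|_{L^2(\ot)}^2\bigr)^{1/2},\qquad r^*:=\frac{2(N+2)}{N}.
\end{equation*}
Hypothesis $q>1+N/2$ is precisely what makes $r<r^*$. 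After absorbing a factor of $Y(k)^{1/2}$ the energy inequality collapses to
\begin{equation*}
Y(k)\leq c(1+T)^{N/(N+2)}\|e^{\alpha_2(\ps-\pe)}\|_{q,\opt}^{2}\,|A_k|^{\gamma},\qquad \gamma:=\frac{2(q-1)}{q}-\frac{N}{N+2}.
\end{equation*}

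With the dyadic levels $k_n:=k_0+M(1-2^{-n})$ and $y_n:=Y(k_n)$, a Chebyshev-type inequality together with one more application of the parabolic embedding gives $|A_{k_{n+1}}|\leq c(1+T)M^{-r^*}2^{r^*(n+1)}y_n^{(N+2)/N}$, so the recursion
$$
y_{n+1}\leq C'\,b^n\,y_n^{1+a},\qquad a:=\gamma\,\frac{N+2}{N}-1,
$$
holds, where $C'$ collects $\|e^{\alpha_2(\ps-\pe)}\|_{q,\opt}^{2}$ together with the $(1+T)$- and $M$-dependent factors and $b=2^{r^*\gamma}$. The assumption $q>1+N/2$ is exactly what makes $a>0$, and Lemma~\ref{ynb} then forces $y_n\to 0$ provided $y_0\leq (C')^{-1/a}b^{-1/a^2}$. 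Using the crude bound $|A_{k_0}|\leq|\op|T$ in the estimate for $Y(k_0)$ and solving the resulting inequality for $M$ produces an admissible $M$ of exactly the form displayed in the statement, whence $\max_{\overline{\ot}}C\leq k_0+M$ almost everywhere on $\ot$. The principal technical subtlety is the uniform-in-$\tau$ pointwise estimate on $H_\tau$; once that is in place, the argument is a standard parabolic De~Giorgi iteration, and the threshold $q>1+N/2$ appears naturally from matching the dual H\"older exponent $q/(q-1)$ against the parabolic Sobolev exponent $2(N+2)/N$.
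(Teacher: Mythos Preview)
Your approach is correct in outline and yields a valid bound, but it differs from the paper's in two substantive ways.

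First, the handling of the reaction term. The paper does \emph{not} use a pointwise bound on $H_\tau^+$; instead it exploits the monotonicity of $G_\tau$ in its second argument to write
\[
H_\tau(h,C,\Phi_s-\Phi_e)(C-k_{n+1})^+\le \sqrt{\theta_\tau(C)}\,G_\tau(h,k_{n+1},\Phi_s-\Phi_e)(C-k_{n+1})^+,
\]
and then absorbs the residual factor $\sqrt{\theta_\tau(C)}$ via the truncation gain
\[
\sqrt{\theta_\tau(C)}\,(C-k_{n+1})^+\le 2^{(n+2)/2}\bigl[(C-k_n)^+\bigr]^{3/2},
\]
running the iteration with $y_n=\int_{\Omega_T}[(C-k_n)^+]^{3q/(2(q-1))}\,dxdt$. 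This route never invokes (H8). Your pointwise estimate on $g_\tau(s)=\sqrt{s}(s+\tau)^{-d}$ genuinely needs $d>\tfrac12$; without it $g_\tau$ is unbounded as $s\to\infty$. Since Lemma~\ref{lcub} sits inside Theorem~\ref{thm2}, which assumes only (H1)--(H7), your argument imports a hypothesis not yet in force. This is harmless for the ultimate application (Theorem~\ref{thm1} does assume (H8)), but the paper's proof is strictly more general at this point.

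Second, your exponents do not match the statement. A careful count of your recursion gives
\[
M\ \ge\ c\,(1+T)^{N/(N+2)}\,T^{(2q-2-N)/(q(N+2))}\,\bigl\|e^{\alpha_2(\Phi_s-\Phi_e)}\bigr\|_{q,\Omega'_T},
\]
i.e.\ precisely the \emph{square root} of the term appearing in the lemma (check: $2(a+1)/(r^*\gamma)=1$, $2/r^*=N/(N+2)$, $a/r^*=(2q-2-N)/(q(N+2))$). So the claim of producing ``exactly the form displayed in the statement'' is inaccurate; you in fact obtain a sharper bound, which of course implies the stated one after one use of $x\le x^2+1$. A minor related point: the condition $q>1+\tfrac{N}{2}$ is not what makes $r<r^*$ (that threshold is $q>2(N+2)/(N+4)$); it is, as you correctly say later, exactly the condition $a>0$.
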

For our later purpose, the number $c$ in the above inequality will be made independent of $\tau$.
\begin{proof} We employ the well known De Giorgi iteration scheme \cite{D}. Let $q$ be given as in the lemma. Select
	\begin{equation}\label{klb}
		k\geq \max\left\{2\|C_0\|_{\infty,\Omega},1\right\}
	\end{equation} as below. Define
	$$k_n=k-\frac{k}{2^{n+1}},\ \ n=0,1,\cdots.$$
	Consequently, 
	\begin{equation}\label{conk}
		k\geq k_{n+1}\geq	k_n\geq\frac{k}{2}\geq \|C_0\|_{\infty,\Omega} \ \ \mbox{for each $n\in\{0,1,\cdots\}$}.
	\end{equation}
	We use $(C-k_{n+1})^+$ as a test function in \eqref{lapp3} to derive
	\begin{eqnarray}\label{cub1}
		\lefteqn{	\frac{1}{2}\frac{d}{dt}\io \varepsilon_e[(C-k_{n+1})^+]^2dx+\io D|\nabla(C-k_{n+1})^+|^2dx}\nonumber\\
		&=&\frac{1}{2}\alpha_3\alpha_4\delta\iop H_\tau(h,C,\ps-\pe)(C-k_{n+1})^+dx.
	\end{eqnarray}
	Note that $G_\tau(y_1,y_2,y_3)$ is decreasing in $y_2$. Thus,
	$$\left[G_\tau(h,C,\ps-\pe)-G_\tau(h,k_{n+1},\ps-\pe)\right](C-k_{n+1})^+\leq 0.$$
	With this in mind, we can estimate the last integral in \eqref{cub1} as follows:
	\begin{eqnarray}
		\lefteqn{\iop H_\tau(h,C,\ps-\pe)(C-k_{n+1})^+dx}\nonumber\\
		&=&\iop\sqrt{\theta_\tau(C)} \left(G_\tau(h,C,\ps-\pe)-G_\tau(h,k_{n+1},\ps-\pe)\right)(C-k_{n+1})^+dx\nonumber\\
		&&+\iop H_\tau(h,k_{n+1},\ps-\pe)(C-k_{n+1})^+dx\nonumber\\
		&\leq&\frac{\left\|h\right\|_{\infty,\opt}}{(\theta_\tau(k_{n+1})+\tau)^{d}}\iop \sqrt{\theta_\tau(C)} e^{\alpha_2(\ps-\pe)}(C-k_{n+1})^+dx.\label{dt1}
	\end{eqnarray}
	Here we have used \eqref{gtdf},  the definition of $G_\tau$. We easily verify from \eqref{tdef}, \eqref{thdef}, \eqref{klb}, and \eqref{conk} that
	$$\frac{1}{\theta_\tau(k_{n+1})+\tau}\leq 2.$$
	Substitute \eqref{dt1} into \eqref{cub1},  integrate the resulting inequality with respect to $t$, and keep in mind the fact that $\left.(C-k_{n+1})^+\right|_{t=0}=0$ due to \eqref{conk} to deduce
	\begin{eqnarray}
		\lefteqn{\sup_{0\leq t\leq T}\io \ee[(C-k_{n+1})^+]^2dx+\ioT D|\nabla(C-k_{n+1})^+|^2dxdt}\nonumber\\
		&\leq&c\int_{\opt} \sqrt{\theta_\tau(C)} e^{\alpha_2(\ps-\pe)}(C-k_{n+1})^+dxdt.\label{ce1}
	\end{eqnarray}
	Note that
	\begin{eqnarray}
		\left[(C-k_{n})^+\right]^{\frac{3}{2}}&\geq&\sqrt{(C-k_{n})^+}(C-k_{n+1})^+\nonumber\\
		&\geq & \sqrt{C\left(1-\frac{k_n}{k_{n+1}}\right)}(C-k_{n+1})^+\nonumber\\
		&\geq &\frac{1}{2^{\frac{n+2}{2}}}\sqrt{\theta_\tau(C)}(C-k_{n+1})^+,\nonumber
	\end{eqnarray}
from whence follows
	\begin{eqnarray}
		\int_{\opt} \sqrt{\theta_\tau(C)} e^{\alpha_2(\ps-\pe)}(C-k_{n+1})^+dxdt	&\leq&2^{\frac{n+2}{2}} \int_{\opt} e^{\alpha_2(\ps-\pe)} \left[(C-k_{n})^+\right]^{\frac{3}{2}}dxdt\nonumber\\
		&\leq& 2^{\frac{n+2}{2}}\|e^{\alpha_2(\ps-\pe)}\|_{q,\opt}y_n^{\frac{q-1}{q}},\label{u11}
	\end{eqnarray}
	where
	$$y_n=\ioT\left[(C-k_{n})^+\right]^{\frac{3q}{2(q-1)}}dxdt.$$
	Recall from the Sobolev embedding theorem that
	\begin{eqnarray}
		\lefteqn{\ioT\left[(C-k_{n+1})^+\right]^{2+\frac{4}{N}}dxdt}\nonumber\\
		&\leq&\int_{0}^{T}\left(\io\left[(C-k_{n+1})^+\right]^2dx \right)^{\frac{2}{N}}\left(\io\left[(C-k_{n+1})^+\right]^{\frac{2N}{N-2}}dx \right)^{\frac{N-2}{N}}dt\nonumber\\
		&\leq&c\left(\sup_{0\leq t\leq T}\io\left[(C-k_{n+1})^+\right]^2dx \right)^{\frac{2}{N}}\ioT\left|\nabla(C-k_{n+1})^+\right|^2dxdt\nonumber\\
		&&+c\left(\sup_{0\leq t\leq T}\io\left[(C-k_{n+1})^+\right]^2dx \right)^{\frac{2}{N}}\int_{0}^{T}\io\left[(C-k_{n+1})^+\right]^2dxdt\nonumber\\
		&\leq&c(1+T)2^{\frac{(n+2)(N+2)}{2N}}\left\|e^{\alpha_2(\ps-\pe)}\right\|_{q,\opt}^{\frac{N+2}{N}}y_n^{\frac{(q-1)(N+2)}{qN}}
		.\label{ce3}
	\end{eqnarray}
	The last step is due to \eqref{ce1} and \eqref{u11}. We easily see that 
	$$y_n\geq \int_{\ot\cap\{C\geq k_{n+1}\}}(k_{n+1}-k_n)^{\frac{3q}{2(q-1)}}dxdt=\frac{k^{\frac{3q}{2(q-1)}}}{2^{\frac{3q(n+2)}{2(q-1)}}}\left|\{C\geq k_{n+1}\}\right|.$$
	Furthermore, our assumption on $q$ implies that
	\begin{equation}
		2+\frac{4}{N}>\frac{3q}{2(q-1)},\ \ \beta\equiv\frac{3}{4}-\frac{3qN}{4(N+2)(q-1)}>0.\nonumber
	\end{equation}
	Keeping the preceding three inequalities  and \eqref{ce3} in mind, we estimate that
	\begin{eqnarray}
		y_{n+1}&\leq&\left(\ioT\left[(C-k_{n+1})^+\right]^{2+\frac{4}{N}}dxdt\right)^{\frac{3qN}{4(N+2)(q-1)}}\left|\{C\geq k_{n+1}\}\right|^{\frac{1+4\beta}{4}}\nonumber\\
		&\leq&\frac{c(1+T)^{\frac{3qN}{4(N+2)(q-1)}}\left\|e^{\alpha_2(\ps-\pe)}\right\|_{q,\opt}^{\frac{3q}{4(q-1)}}2^{\frac{3q(n+2)}{8(q-1)}\left(\frac{1}{2}+\beta\right)}}{k^{^{\frac{3q(1+4\beta)}{8(q-1)}}}}y_n^{1+\beta}.\nonumber
	\end{eqnarray}
	By Lemma \ref{ynb}, if we choose $k$ so large that 
	$$y_0\leq\ioT C^{\frac{3q}{2(q-1)}}dxdt\leq \frac{ck^{^{\frac{3q(1+4\beta)}{8\beta(q-1)}}}}{(1+T)^{\frac{3qN}{4\beta(N+2)(q-1)}}\left\|e^{\alpha_2(\ps-\pe)}\right\|_{q,\opt}^{\frac{3q}{4(q-1)\beta}}}
	$$	then
	$$C\leq k.$$ In view of \eqref{klb}, it is enough for us to take
	$$k=c(1+T)^{\frac{2N}{(N+2)(1+4\beta)}}\left\|e^{\alpha_2(\ps-\pe)}\right\|_{q,\opt}^{\frac{2}{1+4\beta}}\|C\|_{\frac{3q}{2(q-1)},\ot}^{\frac{4\beta}{1+4\beta}}+2\|C_0\|_{\infty,\Omega}+1.$$
	That is,
	\begin{eqnarray}
		\|C\|_{\infty,\ot}&\leq&c(1+T)^{\frac{2N}{(N+2)(1+4\beta)}}\left\|e^{\alpha_2(\ps-\pe)}\right\|_{q,\opt}^{\frac{2}{1+4\beta}}\|C\|_{\frac{3q}{2(q-1)},\ot}^{\frac{4\beta}{1+4\beta}}+2\|C_0\|_{\infty,\Omega}+1\nonumber\\
		&\leq&c(1+T)^{\frac{2N}{(N+2)(1+4\beta)}}T^{\frac{8\beta(q-1)}{3q(1+4\beta)}}\left\|e^{\alpha_2(\ps-\pe)}\right\|_{q,\opt}^{\frac{2}{1+4\beta}}\|C\|_{\infty,\ot}^{\frac{4\beta}{1+4\beta}}
		+2\|C_0\|_{\infty,\Omega}+1\nonumber\\
		&\leq&\frac{1}{2}	\|C\|_{\infty,\ot}+c(1+T)^{\frac{2N}{N+2}}T^{\frac{8\beta(q-1)}{3q}}\left\|e^{\alpha_2(\ps-\pe)}\right\|_{q,\opt}^{2}+2\|C_0\|_{\infty,\Omega}+1.\nonumber
	\end{eqnarray}
	This implies the desired result.\end{proof}
To conclude the proof of Theorem \ref{thm2},  we can easily infer from \eqref{uve11}, \eqref{uve12}, and \eqref{htdf} that
$$\|\pe\|_{\infty,\ot}+\|\ps\|_{\infty,\opt}\leq c(\tau).$$
This combined with Lemma \ref{lcub} yields \eqref{cbd2}. Now we are ready to invoke Lemma \ref{lsf}. Upon doing so, we obtain a fixed point of $\mathbb{A}$, which produces a weak solution to problem \eqref{app1}-\eqref{app6}. This completes the proof of Theorem \ref{thm2}.

\section{Proof of Theorem \ref{thm1}}
In this section, we offer the proof of Theorem \ref{thm1}. For this purpose, we denote the solutions to problem \eqref{app1}-\eqref{app6} constructed in Theorem \ref{thm2} by $(\pet,\pst,\ct)$ with $\tau=\frac{1}{k}$, where $k=1,2,\cdots$. The plan is to derive enough a priori estimates for the sequence to justify passing to the limit in problem \eqref{app1}-\eqref{app6}. The generic positive constant $c$ in the subsequent calculations will be independent of $\tau$. The key to our development is the following lemmas, which asserts that $\ct$ is bounded away from $0$ below.
\begin{lemma}\label{lclb}
	For each $q>1+\frac{N}{2}$ there is a positive number $c$ such that
	\begin{equation}
		\frac{1}{\ct}\leq \frac{1}{\min_{\overline{\Omega}}C_0}+ c(1+T)^{\frac{2N}{(N+2)(2d-1)}}\left\|e^{-\alpha_2(\pst-\pet)}\right\|_{q,\opt}^{\frac{2}{2d-1}}.\nonumber
	\end{equation}
\end{lemma}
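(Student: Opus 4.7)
The strategy is De Giorgi iteration on the sub-level sets $\{\ct < k_n\}$ with $k_n \downarrow \mu$, mirroring the proof of Lemma \ref{lcub} with ``above'' and ``below'' interchanged. Choose $\mu > 0$ satisfying $\mu \le \tfrac{1}{2}\min_{\overline\Omega}C_0$ and $\kappa(3\mu/2) < 1$, so that by \eqref{thdef} one has $\theta_\tau(s) = s$ on $[0,3\mu/2]$ uniformly in $\tau \in (0,1)$. Set $k_n = \mu(1 + 2^{-n-1})$ and $\tilde w_n = (k_n - \ct)^+$. Then $\tilde w_n(\cdot, 0) \equiv 0$, and on $\{\ct < k_{n+1}\}$ one has both $\tilde w_n \ge \tilde w_{n+1}$ and $\tilde w_n \ge \mu/2^{n+2}$. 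Testing \eqref{app3} against $\tilde w_{n+1}$ and integrating in time produces the usual energy inequality with right-hand side $c\int_{\opt}(-H_\tau(h, \ct, \pst - \pet))\tilde w_{n+1}\,dx\,dt$.

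The key step is to split, exactly as in Lemma \ref{lcub},
\begin{equation*}
-H_\tau = -\sqrt{\theta_\tau(\ct)}\bigl[G_\tau(h, \ct, \pst - \pet) - G_\tau(h, k_{n+1}, \pst - \pet)\bigr] - \sqrt{\theta_\tau(\ct)}\, G_\tau(h, k_{n+1}, \pst - \pet);
\end{equation*}
on $\{\ct < k_{n+1}\}$ the first bracket is $\ge 0$ by the monotonicity of $G_\tau$ in $y_2$ (cf.~\eqref{gpr}), so the first term is dropped. In the second term one discards the non-positive $e^{\alpha_2(\pst - \pet)}$-contribution and keeps only $h^{-1}k_{n+1}^d e^{-\alpha_2(\pst - \pet)}$, which together with $\sqrt{\theta_\tau(\ct)} \le \sqrt{k_{n+1}}$ yields the pointwise bound
\begin{equation*}
(-H_\tau)\,\tilde w_{n+1}\,\chi_{\{\ct < k_{n+1}\}} \le K\, k_{n+1}^{d+1/2}\, e^{-\alpha_2(\pst - \pet)}\, \tilde w_{n+1}.
\end{equation*}
This is the first place hypothesis (H8) enters: the smallness $O(\mu^{d+1/2})$ in the source is what drives the iteration to zero. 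Using the geometric inequality $\tilde w_{n+1} \le (2^{n+2}/\mu)^{1/2}\tilde w_n^{3/2}$ on $\{\ct < k_{n+1}\}$ (the analogue of the inequality just before \eqref{u11}), followed by H\"older in $(x,t)$, the parabolic Sobolev inequality as in \eqref{ce3}, and the measure bound $|\{\ct < k_{n+1}\}| \le (2^{n+2}/\mu)^{3q/(2(q-1))}y_n$ where $y_n = \int_{\ot}\tilde w_n^{3q/(2(q-1))}\,dxdt$, one derives a recursion
\begin{equation*}
y_{n+1} \le c\, b^n\, \mu^{P}\, (1+T)^{\alpha_0}\, \|e^{-\alpha_2(\pst - \pet)}\|_{q, \opt}^{3q/(4(q-1))}\, y_n^{1+\beta},
\end{equation*}
with $\beta = \tfrac{3(2q-N-2)}{4(q-1)(N+2)} > 0$, $\alpha_0 = \tfrac{3qN}{4(q-1)(N+2)}$ and $P = \tfrac{3qd}{4(q-1)} - \tfrac{3q}{2(q-1)}(1 - \alpha_0)$.

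Invoking Lemma \ref{ynb} with the crude bound $y_0 \le (3\mu/2)^{3q/(2(q-1))}\,T|\Omega|$ reduces everything to a closing condition on $\mu$. The decisive algebraic identity is
\begin{equation*}
\tfrac{3q}{2(q-1)}\,\beta + P = \tfrac{3q(2d-1)}{8(q-1)},
\end{equation*}
which rests on $qN + 2q - N - 2 = (q-1)(N+2)$; after inverting, it produces exactly the exponents $2/(2d-1)$ on $\|e^{-\alpha_2(\pst - \pet)}\|_{q,\opt}$ and $2N/[(N+2)(2d-1)]$ on $(1+T)$, while the constraint $\mu \le \tfrac{1}{2}\min_{\overline\Omega}C_0$ accounts for the $1/\min_{\overline\Omega}C_0$ summand. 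The main obstacle is precisely this bookkeeping: the identity above degenerates to zero when $d = 1/2$, which is why (H8) is sharp for the iteration to close.
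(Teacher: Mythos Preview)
Your argument is correct and reaches the stated bound, but the route differs from the paper's in one essential choice: the test function. The paper tests \eqref{app3} with the \emph{nonlinear} function $\bigl((\ct)^{-1/2}-k_{n+1}^{1/2}\bigr)^+$, where $k_n=k-k/2^{n+1}$ with $k$ large; this produces an energy for $\bigl(k_{n+1}^{-1/4}-(\ct)^{1/4}\bigr)^+$ and the iteration quantity $y_n=\int_{\ot}\bigl[(k_n^{-1/4}-(\ct)^{1/4})^+\bigr]^{2+4/N}dxdt$. The payoff is that $\sqrt{\theta_\tau(\ct)}\bigl((\ct)^{-1/2}-k_{n+1}^{1/2}\bigr)^+\le 1$ absorbs the $\sqrt{C}$-factor in $H_\tau$ for free, so the right-hand side collapses to $ck^{-d}\|e^{-\alpha_2(\pst-\pet)}\|_{q}|\{\ct\le k_{n+1}^{-1}\}|^{(q-1)/q}$ with no residual $\tilde w$-dependence; the recursion then carries the single exponent $\lambda=\tfrac{(q-1)(N+2)}{qN}-1$ and closes with one line of algebra.

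Your linear test function $(k_{n+1}-\ct)^+$ is a faithful mirror of Lemma~\ref{lcub} and works as well, at the cost of the extra steps you outline: the bound $\sqrt{\theta_\tau(\ct)}\le\sqrt{k_{n+1}}$, the inequality $\tilde w_{n+1}\le(2^{n+2}/\mu)^{1/2}\tilde w_n^{3/2}$, and then the exponent identity $\tfrac{3q}{2(q-1)}\beta+P=\tfrac{3q(2d-1)}{8(q-1)}$ (which I have checked). Both approaches yield the same exponents $2/(2d-1)$ and $2N/[(N+2)(2d-1)]$; the paper's choice trades a clever test function for lighter bookkeeping, while yours is more transparent as a direct dualization of the upper-bound argument. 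Two minor remarks: the condition $\kappa(3\mu/2)<1$ is unnecessary since only $\theta_\tau(s)\le s$ is used, and your citation of \eqref{gpr} should point to the $y_2$-monotonicity stated just below it.
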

\begin{proof}
	Pick 
	\begin{equation}\label{kcon2}
		\frac{2}{k}\leq\min_{\overline{\Omega}}C_0
	\end{equation}
	as below.
	Set
	$$k_n=k-\frac{k}{2^{n+1}},\ \ n=0,1,\cdots.$$
	Recall that $G_\tau(y_1,y_2,y_3)$ is decreasing in $y_2$. Thus,
	$$M_\tau\equiv \left[G_\tau(h,\ct,\pst-\pet)-G_\tau(h,k_{n+1}^{-1},\pst-\pet)\right]\left(\left(\ct\right)^{-\frac{1}{2}}-k_{n+1}^{\frac{1}{2}}\right)^+\geq 0.$$
	Use $\left(\left(\ct\right)^{-\frac{1}{2}}-k_{n+1}^{\frac{1}{2}}\right)^+$ as a test function in \eqref{lapp3} and apply the above inequality and the definition of $G_\tau$ to get
	\begin{eqnarray}
		\lefteqn{\frac{d}{dt}\io\ee\int_{\ct}^{k_{n+1}^{-1}}\left(s^{-\frac{1}{2}}-k_{n+1}^{\frac{1}{2}}\right)^+dsdx+8\io D\left|\nabla\left(k_{n+1}^{-\frac{1}{4}}-\left(\ct\right)^{\frac{1}{4}}\right)^+\right|^2dx}\nonumber\\
		&=&-\frac{1}{2}\alpha_3\alpha_4\delta\iop H_\tau(h,\ct,\pst-\pet)\left(\left(\ct\right)^{-\frac{1}{2}}-k_{n+1}^{\frac{1}{2}}\right)^+dx\nonumber\\
		&=&-\frac{1}{2}\alpha_3\alpha_4\delta\iop\sqrt{\theta_\tau(\ct)}M_\tau dx\nonumber\\
		&&-\frac{1}{2}\alpha_3\alpha_4\delta\iop \sqrt{\theta_\tau(\ct)} G_\tau(h,k_{n+1}^{-1},\pst-\pet)\left(\left(\ct\right)^{-\frac{1}{2}}-k_{n+1}^{\frac{1}{2}}\right)^+dx\nonumber\\
		&\leq &\frac{1}{2}\alpha_3\alpha_4\delta  \theta_\tau^d\left(k_{n+1}^{-1}\right)\int_{\Omega^\prime\cap\{\ct\leq k_{n+1}^{-1}\}} h^{-1}e^{-\alpha_2(\pst-\pet)}dx\nonumber\\
		&\leq &\frac{c}{k^d}\int_{\Omega^\prime\cap\{\ct\leq k_{n+1}^{-1}\}}e^{-\alpha_2(\pst-\pet)}dx.\nonumber
	\end{eqnarray}
	The last step is due to \eqref{kcon2} and \eqref{hb}.
	Using \eqref{kcon2} again yields $\left.\left(\left(\ct\right)^{-\frac{1}{2}}-k_{n+1}^{\frac{1}{2}}\right)^+\right|_{t=0}=0$. With this in mind, we integrate the above inequality to derive
	\begin{eqnarray}
		\lefteqn{\sup_{0\leq t\leq T}\io\ee\int_{\ct}^{k_{n+1}^{-1}}\left(s^{-\frac{1}{2}}-k_{n+1}^{\frac{1}{2}}\right)^+ds dx}\nonumber\\
		&&+\ioT D\left|\nabla\left(k_{n+1}^{-\frac{1}{4}}-\left(\ct\right)^{\frac{1}{4}}\right)^+\right|^2dxdt\nonumber\\
		&\leq &\frac{c}{k^d}\int_{\Omega^\prime\cap\{\ct\leq k_{n+1}^{-1}\}}e^{-\alpha_2(\pst-\pet)}dxdt\nonumber\\
		&\leq &\frac{c}{k^d}\|e^{-\alpha_2(\pst-\pet)}\|_{q,\opt}\left|\opt\cap\left\{\ct\leq k_{n+1}^{-1}\right\}\right|^{\frac{q-1}{q}}.\label{uce4}
	\end{eqnarray}
	Note that on the set $\ot\cap\left\{\ct\leq k_{n+1}^{-1}\right\}$ we have
	\begin{eqnarray}
		\int_{\ct}^{k_{n+1}^{-1}}\left(\frac{1}{\sqrt{s}}-\sqrt{k_{n+1}}\right)^+ds&=&k_{n+1}^{-\frac{1}{2}}-2\sqrt{\theta_\tau(\ct)}+\sqrt{k_{n+1}}\ct\nonumber\\
		&=&\left(k_{n+1}^{-\frac{1}{4}}-(k_{n+1}\ct)^{\frac{1}{4}}\left(\ct\right)^{\frac{1}{4}}\right)^2\nonumber\\
		&\geq&\left[\left(k_{n+1}^{-\frac{1}{4}}-\left(\ct\right)^{\frac{1}{4}}\right)^+\right]^2.\label{uce5}
	\end{eqnarray}
	Recall from the Sobolev embedding theorem that
	\begin{eqnarray}
		y_{n+1}&\equiv&\ioT\left[\left(k_{n+1}^{-\frac{1}{4}}-\left(\ct\right)^{\frac{1}{4}}\right)^+\right]^{2+\frac{4}{N}}dxdt\nonumber\\
		&\leq&\int_{0}^{T}\left(\io\left[\left(k_{n+1}^{-\frac{1}{4}}-\left(\ct\right)^{\frac{1}{4}}\right)^+\right]^2dx \right)^{\frac{2}{N}}\left(\io\left[\left(k_{n+1}^{-\frac{1}{4}}-\left(\ct\right)^{\frac{1}{4}}\right)^+\right]^{\frac{2N}{N-2}}dx \right)^{\frac{N-2}{N}}dt\nonumber\\
		&\leq&c\left(\sup_{0\leq t\leq T}\io\left[\left(k_{n+1}^{-\frac{1}{4}}-\left(\ct\right)^{\frac{1}{4}}\right)^+\right]^2dx \right)^{\frac{2}{N}}\ioT\left|\nabla\left(k_{n+1}^{-\frac{1}{4}}-\left(\ct\right)^{\frac{1}{4}}\right)^+\right|^2dxdt\nonumber\\
		&&+c\left(\sup_{0\leq t\leq T}\io\left[\left(k_{n+1}^{-\frac{1}{4}}-\left(\ct\right)^{\frac{1}{4}}\right)^+\right]^2dx \right)^{\frac{2}{N}}\int_{0}^{T}\io\left[\left(k_{n+1}^{-\frac{1}{4}}-\left(\ct\right)^{\frac{1}{4}}\right)^+\right]^2dxdt\nonumber\\
		&\leq&\frac{c(1+T)}{k^{\frac{d(N+2)}{N}}}\left\|e^{-\alpha_2(\pst-\pet)}\right\|_{q,\opt}^{\frac{N+2}{N}}\left|\opt\cap\left\{\ct\leq k_{n+1}^{-1}\right\}\right|^{\frac{(q-1)(N+2)}{qN}}.
		.\label{uce3}
	\end{eqnarray}
	The last step is due to \eqref{uce5} and \eqref{uce4}.
	Observe that 
	\begin{eqnarray}
		y_n&\geq&\int_{\left\{\ct\leq k_{n+1}^{-1}\right\}}\left[\left(k_{n}^{-\frac{1}{4}}-\left(\ct\right)^{\frac{1}{4}}\right)^+\right]^{2+\frac{4}{N}}dxdt\nonumber\\
		&\geq&\left(k_n^{-\frac{1}{4}}-k_{n+1}^{-\frac{1}{4}}\right)^{2+\frac{4}{N}}\left|\opt\cap\left\{\ct\leq k_{n+1}^{-1}\right\}\right|\nonumber\\
		&\geq&\left(\frac{k_{n+1}-k_n}{4k_n^{\frac{1}{4}}k_{n+1}}\right)^{2+\frac{4}{N}}\left|\opt\cap\left\{\ct\leq k_{n+1}^{-1}\right\}\right|\nonumber\\
		&\geq&\frac{c}{2^{\frac{2(n+2)(N+2)}{N}}k^{\frac{N+2}{2N}}}\left|\opt\cap\left\{\ct\leq k_{n+1}^{-1}\right\}\right|.\nonumber
	\end{eqnarray}
	Use this in \eqref{uce3} to get
	\begin{equation}
		y_{n+1}\leq \frac{c(1+T)}{k^{\frac{(N+2)}{N}\left(d-\frac{1+\lambda}{2}\right)}}\left\|e^{-\alpha_2(\pst-\pet)}\right\|_{q,\opt}^{\frac{N+2}{N}}2^{\frac{2(n+2)(N+2)(1+\lambda)}{N}}y_n^{1+\lambda},\nonumber
	\end{equation}
	where
	\begin{equation}
		\lambda=\frac{(q-1)(N+2)}{qN}-1>0\nonumber
	\end{equation}
	due to our assumption.
	According to Lemma \ref{ynb}, if we choose $k$ so large that 
	\begin{eqnarray}
		y_0&=&\ioT\left[\left(\left(\frac{k}{2}\right)^{-\frac{1}{4}}-\left(\ct\right)^{\frac{1}{4}}\right)^+\right]^{2+\frac{4}{N}}dxdt\nonumber\\
		&\leq& \left(\frac{k}{2}\right)^{-\frac{N+2}{2N}}|\ot|\leq \frac{k^{\frac{(N+2)}{N\lambda}\left(d-\frac{1+\lambda}{2}\right)}}{c(1+T)^{\frac{1}{\lambda}}\left\|e^{-\alpha_2(\pst-\pet)}\right\|_{q,\opt}^{\frac{N+2}{N\lambda}}}	\label{uce7}
	\end{eqnarray}
	then
	$$\ct\geq \frac{1}{k}.$$ Under (H8), \eqref{uce7} is equivalent to
	$$k\geq c(1+T)^{\frac{2N}{(N+2)(2d-1)}}\left\|e^{-\alpha_2(\pst-\pet)}\right\|_{q,\opt}^{\frac{2}{2d-1}}.$$
	This combined with \eqref{kcon2} implies the desired result. The proof is complete.
\end{proof}
\begin{lemma}We have
	\begin{equation}\label{esl2}
			\io\left(\left(\pet\right)^2+\left(\pst\right)^2\chi_{\Omega^\prime}\right)dx
		\leq c\left(\lt\right)^{\frac{3d+3+2\alpha_0}{2}}\iop H_\tau^2(h,\ct,0)dx,
	\end{equation}
where $\alpha_0$ is given as in \eqref{kacon} and 
\begin{equation}
	\lt=\lt(T)=\max_{\overline{\ot}}\frac{1}{\ct}.\nonumber
\end{equation}
	\end{lemma}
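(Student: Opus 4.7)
The natural starting point is to pair equations \eqref{app1} and \eqref{app2}: test \eqref{app1} with $\pet$ on $\Omega$, test \eqref{app2} with $\pst$ on $\op$, and add. Using the homogeneous Neumann conditions and the fact that the source terms are equal and opposite, one arrives at
\[
\io\kact|\nabla\pet|^2\,dx + \iop\sigma|\nabla\pst|^2\,dx + \tau\io\pet^2\,dx + \tau\iop\pst^2\,dx + \tfrac{1}{2}\alpha_4\iop H_\tau(h,\ct,\pst-\pet)(\pst-\pet)\,dx = 0.
\]
The coupling term is handled by the algebraic decomposition
\[
H_\tau(h,\ct,\pst-\pet)(\pst-\pet) = \sqrt{\theta_\tau(\ct)}\bigl[G_\tau(h,\ct,\pst-\pet)-G_\tau(h,\ct,0)\bigr](\pst-\pet) + H_\tau(h,\ct,0)(\pst-\pet).
\]
By the monotonicity of $G_\tau$ in its third argument together with the pointwise lower bound $\partial_{y_3}G_\tau\geq 2\alpha_2$ from \eqref{gpr}, the first piece is bounded below by $2\alpha_2\sqrt{\theta_\tau(\ct)}(\pst-\pet)^2$, producing a coercive quadratic contribution.

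Next I would invoke the pointwise lower bounds $\kappa_\tau(\ct) \geq c(\lt)^{-\alpha_0}$ (from (H1) applied to $\ct\geq 1/\lt$), the uniform lower bound on $\sigma$ from (H2), and $\sqrt{\theta_\tau(\ct)} \geq (\lt)^{-1/2}$. Applying Cauchy--Schwarz and a suitably weighted Young's inequality to the residual forcing $\iop|H_\tau(h,\ct,0)|\,|\pst-\pet|\,dx$, the $L^2$ norm of $\pst-\pet$ can be partially absorbed into the coercive quadratic term, producing uniform-in-$\tau$ estimates of $\io|\nabla\pet|^2\,dx$, $\iop|\nabla\pst|^2\,dx$, and $\iop(\pst-\pet)^2\,dx$, each in the form of a power of $\lt$ times $J:=\iop H_\tau^2(h,\ct,0)\,dx$.

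The final step passes from gradient bounds to $L^2$ bounds on $\pet$ and $\pst$ via the Poincaré--Wirtinger inequality: decompose $\pet$ and $\pst$ into their means on $\Omega$ and $\op$ respectively plus zero-mean remainders, the latter controlled by the gradients already estimated. The two means $\avint_\Omega\pet\,dx$ and $\avint_{\op}\pst\,dx$ satisfy the single linear relation $|\Omega|\avint_\Omega\pet\,dx+|\op|\avint_{\op}\pst\,dx=0$, obtained by testing \eqref{app1} and \eqref{app2} with the constant $1$ and adding. A second relation is furnished by Jensen's inequality applied to the $L^2$ bound on $\pst-\pet$, which controls $|\avint_{\op}\pst\,dx - \avint_{\op}\pet\,dx|^2$; the latter average $\avint_{\op}\pet\,dx$ is linked to $\avint_\Omega\pet\,dx$ through the partition $\Omega=\op\cup\Omega_s$, where hypothesis (H9) keeps the relevant geometric ratios bounded. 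The main obstacle will be the careful bookkeeping of the accumulated powers of $\lt$ through the chain of Young, Cauchy--Schwarz, and Poincaré estimates so as to arrive at exactly the exponent $(3d+3+2\alpha_0)/2$; the explicit $d$-dependence likely arises from an additional pointwise estimate such as $|G_\tau(h,\ct,0)|\leq c(\lt)^d$ at an intermediate step, which trades factors of the forcing $H_\tau^2$ for extra powers of $\lt$ and thereby injects the $d$-dependence into the final constant.
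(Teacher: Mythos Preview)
Your overall strategy --- test \eqref{app1} with $\pet$, \eqref{app2} with $\pst$, add, exploit monotonicity of $G_\tau$ in its third argument for coercivity, apply a weighted Young inequality to the residual forcing, then pass from gradient bounds to $L^2$ bounds via Poincar\'e--Wirtinger combined with the mean relation $\io\pet\,dx+\iop\pst\,dx=0$ and hypothesis (H9) --- is exactly the route the paper takes.

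There is, however, a genuine gap in your coercivity step. The bound $\partial_{y_3}G\geq 2\alpha_2$ in \eqref{gpr} is stated for the \emph{unapproximated} function $G$, where the two summands in $\partial_{y_3}G$ have product identically equal to $1$. For the approximation $G_\tau$ defined in \eqref{gtdf} this is no longer true: the product of the two summands is $\theta_\tau^d(\ct)/(\theta_\tau(\ct)+\tau)^d<1$, so AM--GM yields only
\[
\partial_{y_3}G_\tau(h,\ct,\xi)\;\geq\;\frac{2\alpha_2\,\theta_\tau^{d/2}(\ct)}{(\theta_\tau(\ct)+\tau)^{d/2}},
\]
which is the bound \eqref{rej4} the paper actually uses. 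Since the constants in this lemma must be independent of $\tau$, you cannot simply absorb the ratio into a harmless constant. It is precisely this weighted coercivity that produces the $d$-dependent powers of $\lt$: after the corresponding weighted Young inequality one obtains $\iop(\pst-\pet)^2\,dx\leq c(\lt)^{d+1}J$ and $\io|\nabla\pet|^2\,dx+\iop|\nabla\pst|^2\,dx\leq c(\lt)^{(d+1+2\alpha_0)/2}J$ (these are \eqref{ese4} and \eqref{ese8}), from which the final exponent $(3d+3+2\alpha_0)/2$ is assembled. Your conjecture that the $d$-dependence enters instead through a pointwise estimate $|G_\tau(h,\ct,0)|\leq c(\lt)^d$ is off target; the forcing integral $J=\iop H_\tau^2(h,\ct,0)\,dx$ is kept intact throughout and never traded for powers of $\lt$.
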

\begin{proof}	Integrate \eqref{lapp1} over $\Omega$ and take into account the boundary condition \eqref{lapp4} to get
	\begin{equation}\label{es1}
		\tau\io\pet dx=\frac{1}{2}\alpha_4\iop H_\tau(h,\ct,\pst-\pet)dx.
	\end{equation}
In the same manner, we can deduce from \eqref{lapp2} and \eqref{lapp5} that
$$\tau\iop\pst dx=-\frac{1}{2}\alpha_4\iop H_\tau(h,\ct,\pst-\pet)dx.$$
Add this to \eqref{es1} to obtain
\begin{equation}\label{es2}
	\io\pet dx+\iop\pst dx=0.
\end{equation}
Use $\pet$ as a test function in \eqref{lapp1}, then $\pst$ in \eqref{lapp2}, and add the two resulting equations to derive
\begin{eqnarray}
	\lefteqn{	\io\kactt|\nabla\pet|^2dx+\iop\sigma|\nabla\pst|^2dx+\tau\io\left(\pet\right)^2dx+\tau\iop\left(\pst\right)^2dx}\nonumber\\
	&&+\frac{1}{2}\alpha_4\iop H_\tau(h,\ct,\pst-\pet) (\pst-\pet)dx=0.\label{rej1}
\end{eqnarray}
The mean value theorem asserts that there is an $\xi$ lying between $0$ and $\pst-\pet$ such that
\begin{eqnarray}
	G_\tau(h,\ct,\pst-\pet)&=&G_\tau(h,\ct,\pst-\pet)-G_\tau(h,\ct, 0)+G_\tau(h,\ct ,0)\nonumber\\
	&=&\partial_{y_3}G_{\tau}(h,\ct,\xi)(\pst-\pet)+G_\tau(h,\ct,0).\nonumber
\end{eqnarray}
Note from \eqref{gpr} that
\begin{eqnarray}
	\partial_{y_3}G_{\tau}(h,\ct,\xi)	&=&\alpha_2\left(h(\theta_\tau(\ct)+\tau)^{-d}e^{\alpha_2\xi}+h^{-1}\theta_\tau^d(\ct)e^{-\alpha_2\xi}\right)\nonumber\\
	&\geq &\frac{2\alpha_2\theta_\tau^{\frac{d}{2}}(\ct)}{(\theta_\tau(\ct)+\tau)^{\frac{d}{2}}}.\label{rej4}
\end{eqnarray}
With these in mind, we can estimate the last integral in \eqref{rej1} as follows:
\begin{eqnarray}
	\lefteqn{\frac{1}{2}\alpha_4\iop H_\tau(h,\ct,\pst-\pet) (\pst-\pet)dx}\nonumber\\
	&=&\frac{1}{2}\alpha_4\iop \sqrt{\theta_\tau(\ct)}\partial_{y_3}G(h,\ct,\xi)(\pst-\pet)^2dx\nonumber\\
	&&+\frac{1}{2}\alpha_4\iop \sqrt{\theta_\tau(\ct)} G_\tau(h,\ct,0)(\pst-\pet)dx\nonumber\\
&\geq& \alpha_2\alpha_4\iop \frac{\theta_\tau^{\frac{d+1}{2}}(\ct)}{(\theta_\tau(\ct)+\tau)^{\frac{d}{2}}}(\pst-\pet)^2dx+\frac{1}{2}\alpha_4\iop H_\tau(h,\ct,0)(\pst-\pet)dx\nonumber\\
	&\geq& \frac{1}{2}\alpha_2\alpha_4\iop \frac{\theta_\tau^{\frac{d+1}{2}}(\ct)}{(\theta_\tau(\ct)+\tau)^{\frac{d}{2}}}(\pst-\pet)^2dx-\frac{\alpha_4}{8\alpha_2}\iop \frac{(\theta_\tau(\ct)+\tau)^{\frac{d}{2}}}{\theta_\tau^{\frac{d-1}{2}}(\ct)}G_\tau^2(h,\ct,0)dx.\nonumber
\end{eqnarray}
Plug this into \eqref{rej1} to derive
\begin{eqnarray}
	\lefteqn{\io \kactt|\nabla\pet|^2dx+\iop\sigma|\nabla\pst|^2dx+\tau\io\left(\pet\right)^2dx+\tau\iop\left(\pst\right)^2dx}\nonumber\\
	&&+ \frac{1}{2}\alpha_2\alpha_4\iop \frac{\theta_\tau^{\frac{d+1}{2}}(\ct)}{(\theta_\tau(\ct)+\tau)^{\frac{d}{2}}}(\pst-\pet)^2dx\leq \frac{\alpha_4}{8\alpha_2}\iop \frac{(\theta_\tau(\ct)+\tau)^{\frac{d}{2}}}{\theta_\tau^{\frac{d-1}{2}}(\ct)} G_\tau^2(h,\ct,0)dx.\label{rej2}
\end{eqnarray}
Note that the function $\frac{s^{\frac{d+1}{2}}}{(s+\tau)^{\frac{d}{2}}}$ is increasing on $[0,\infty)$. Thus,
$$ \frac{\theta_\tau^{\frac{d+1}{2}}(\ct)}{(\theta_\tau(\ct)+\tau)^{\frac{d}{2}}}\geq  \frac{\theta_\tau^{\frac{d+1}{2}}(m_c)}{(\theta_\tau(m_c)+\tau)^{\frac{d}{2}}},\ \ \mbox{where $m_c=\min_{\overline{\ot}}\ct=\frac{1}{\lt}$}.$$
We derive from \eqref{rej2} that
\begin{eqnarray}
		\iop (\pst-\pet)^2dx&\leq &\frac{(\theta_\tau(m_c)+\tau)^{\frac{d}{2}}}{4\alpha_2^2\theta_\tau^{\frac{d+1}{2}}(m_c)}\iop \frac{(\theta_\tau(\ct)+\tau)^{\frac{d}{2}}}{\theta_\tau^{\frac{d-1}{2}}(\ct)}G_\tau^2(h,\ct,0)dx\nonumber\\
	&\leq &\frac{(\theta_\tau(m_c)+\tau)^{d}}{4\alpha_2^2\theta_\tau^{d+1}(m_c)}\iop H_\tau^2(h,\ct,0)dx\nonumber\\
		&\leq &c\left(\lt\right)^{d+1}\iop H_\tau^2(h,\ct,0)dx.\label{ese4}
\end{eqnarray}
Here we have used the obvious fact that $m_c\leq \min_{\overline{\Omega}} C_0(x)$ and assumed $\frac{1}{\tau}\geq \min_{\overline{\Omega}} C_0(x)$. In view of \eqref{kacon}, we have
$$\kactt\geq \min\{c_0(\ct+\tau)^{\alpha_0},\min_{s\in[s_0,\infty)}\kappa(s)\}\geq \min\{c_0m_c^{\alpha_0},\min_{s\in[s_0,\infty)}\kappa(s)\}.$$
Without loss of generality, assume $c_0m_c^{\alpha_0}\leq\min\{\min_{s\in[s_0,\infty)}\kappa(s),\inf_{\op}\sigma\} $. We obtain from \eqref{rej2} that
\begin{eqnarray}
	\io|\nabla\pet|^2dx+\iop|\nabla\pst|^2dx&\leq&\frac{c}{m_c^{\alpha_0}}\iop\frac{(\theta_\tau(\ct)+\tau)^{\frac{d}{2}}}{\theta_\tau^{\frac{d-1}{2}}(\ct)} G_\tau^2(h,\ct,0)dx\nonumber\\
	&\leq &\frac{c(m_c+\tau)^{\frac{d}{2}}}{ m_c^{\frac{d+1+2\alpha_0}{2}}}\iop H_\tau^2(h,\ct,0)dx\nonumber\\
	&\leq &c\left(\lt\right)^{\frac{d+1+2\alpha_0}{2}}\iop H_\tau^2(h,\ct,0)dx.\label{ese8}
\end{eqnarray}
Note from \eqref{es2} that
\begin{eqnarray}
	\left(\io\pet dx\right)^2+	\left(\iop\pst dx\right)^2&=&\frac{1}{2}\left[	\left(\io(\pet+\pst\chi_{\Omega^\prime}) dx\right)^2+\left(\io(\pet-\pst\chi_{\Omega^\prime}) dx\right)^2\right]\nonumber\\
	&=&\frac{1}{2}\left(\iop(\pet-\pst) dx\right)^2+\frac{1}{2}\left(\int_{\Omega\setminus\op}\pet dx\right)^2.\label{ese5}
\end{eqnarray}
We estimate from the Sobolev inequality that
\begin{eqnarray}
\lefteqn{	\io\left(\left(\pet\right)^2+\left(\pst\right)^2\chi_{\Omega^\prime}\right)dx}\nonumber\\
&\leq&2\io\left(\pet-\frac{1}{|\Omega|}\io\pet dx\right)^2dx+2\iop\left(\pst-\frac{1}{|\op|}\iop\pst dx\right)^2dx\nonumber\\
	&&+\frac{2}{|\Omega|}\left(\io\pet dx\right)^2+\frac{2}{|\op|}\left(\iop\pst dx\right)^2\nonumber\\
	&\leq&c\io|\nabla\pet|^2dx+c\iop|\nabla\pst|^2dx+\frac{2}{|\op|}\left[\left(\io\pet dx\right)^2+	\left(\iop\pst dx\right)^2\right]\nonumber\\
	&\leq&c \left(\lt\right)^{\frac{3d+3+2\alpha_0}{2}}\iop H_\tau^2(h,\ct,0)dx+\frac{1}{|\op|}\left(\int_{\Omega\setminus\op}\pet dx\right)^2.\label{ese6}
\end{eqnarray}
The last step is due to \eqref{ese8}, \eqref{ese4}, and \eqref{ese5}. We easily check
$$\frac{1}{|\op|}\left(\int_{\Omega\setminus\op}\pet dx\right)^2\leq \frac{|\Omega_s|}{|\op|}\io\left(\pet\right)^2dx.$$ Making use of this in \eqref{ese6} and keep (H9) in mind yield the lemmas.\end{proof}
\begin{lemma}For each $q>\frac{N}{2}$ there is a positive number $c$ such that
	\begin{eqnarray}
	\esup_\Omega |\pet|+\esup_{\op} |\pst|
	\leq c\left(\lt\right)^{\frac{qN\alpha_0}{2q-N}+\frac{3d+3+2\alpha_0}{4}}\|H_\tau(h,\ct ,0)\|_{q,\op}.\label{lesb}
	\end{eqnarray}
	\end{lemma}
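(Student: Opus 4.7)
The plan is to perform a De Giorgi iteration on the upper level sets of $\pet$ and $\pst$, launching from the $L^2$ estimate \eqref{esl2} and exploiting the lower bound $\kactt\geq c_0 m_c^{\alpha_0}$ (with $m_c=1/\lt$) coming from (H1). Set $u=\pet$ and $v=\pst$. For $k\geq 0$, I test \eqref{lapp1} with $(u-k)^+$ and \eqref{lapp2} with $(v-k)^+$; the zero-order terms $\tau u(u-k)^+$ and $\tau v(v-k)^+$ are nonnegative and may be discarded, so the resulting inequality is $\tau$-free. Splitting
\[
H_\tau(h,\ct,\pst-\pet)=\sqrt{\theta_\tau(\ct)}\bigl[G_\tau(h,\ct,\pst-\pet)-G_\tau(h,\ct,0)\bigr]+H_\tau(h,\ct,0)
\]
and adding the two tested equations, the first piece paired with $(u-k)^+-(v-k)^+$ has a favorable non-positive sign: by the monotonicity \eqref{gpr} of $G_\tau$ in $y_3$, the bracketed difference carries the sign of $v-u$, whereas $(u-k)^+-(v-k)^+$ carries the sign of $u-v$. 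Dropping this term leaves the Caccioppoli-type bound
\[
\io\kactt|\nabla(u-k)^+|^2\,dx+\iop\sigma|\nabla(v-k)^+|^2\,dx\leq c\iop|H_\tau(h,\ct,0)|\bigl[(u-k)^++(v-k)^+\bigr]dx.
\]

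Next I use $\kactt\geq c_0 m_c^{\alpha_0}$ (valid on the regime $\ct+\tau<s_0$ from \eqref{kacon}, with a fixed positive lower bound on the complementary region that WLOG we absorb into the same expression) so the left side dominates $c\,m_c^{\alpha_0}$ times the sum of the $H^1$ seminorms of the truncations. On the right, Hölder with exponents $q$ and $q/(q-1)$ bounds the integral by $\|H_\tau(h,\ct,0)\|_{q,\op}\bigl(\|(u-k)^+\|_{q/(q-1),\op}+\|(v-k)^+\|_{q/(q-1),\op}\bigr)$; since $q>N/2$, the exponent $q/(q-1)$ lies strictly in $[2,2N/(N-2))$, which opens room for Sobolev interpolation. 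Applying the Sobolev embeddings $W^{1,2}(\Omega)\hookrightarrow L^{2N/(N-2)}(\Omega)$ and $W^{1,2}(\op)\hookrightarrow L^{2N/(N-2)}(\op)$ (guaranteed by (H5)) and absorbing the residual $L^2$ term from the Sobolev inequality via the preceding bound \eqref{esl2} (this is what injects the factor $(\lt)^{(3d+3+2\alpha_0)/4}$ into the final answer), one arrives at a recursion of the Stampacchia type
\[
y_{n+1}\leq C b^n\,y_n^{1+\beta}
\]
for a truncation sequence $k_n\uparrow k$ and $y_n$ an appropriate $L^p$ mass of $(u-k_n)^+$ and $(v-k_n)^+$, with $\beta=\beta(q,N)>0$.

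Lemma \ref{ynb} then forces $y_n\to 0$, i.e., $u\leq k$ and $v\leq k$ a.e., once $k$ exceeds the threshold in \eqref{lesb}; bookkeeping of the $m_c^{-\alpha_0}$ factors accumulated across iteration steps yields precisely the exponent $qN\alpha_0/(2q-N)$ of $\lt$. The lower bounds for $u$ and $v$ follow from an entirely parallel argument with test functions $-(u+k)^-$ and $-(v+k)^-$: the zero-order contribution $\tau u\cdot(-(u+k)^-)=\tau u(u+k)\chi_{\{u<-k\}}$ is again nonnegative (both factors being negative on that set), and the same monotonicity of $G_\tau$ furnishes the analogous favorable sign for the cross term, so the identical iteration bounds $(u+k)^-+(v+k)^-\chi_{\op}$. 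Combining upper and lower estimates yields \eqref{lesb}. The principal technical difficulty will be the precise bookkeeping of the exponent of $\lt$—balancing the accumulated $m_c^{-\alpha_0}$ against the Sobolev–Hölder gains per iteration step to produce exactly $qN/(2q-N)$, and verifying that absorbing the $L^2$ residual contributes the additive $(3d+3+2\alpha_0)/4$ rather than something strictly larger.
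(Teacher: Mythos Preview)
Your plan is sound and shares the paper's key structural ingredients---the monotonicity of $G_\tau$ in $y_3$ for the favorable sign, the lower bound $\kactt\geq c\,m_c^{\alpha_0}$, and the $L^2$ estimate \eqref{esl2} as the launching pad---but the iteration mechanism is genuinely different. The paper runs a \emph{Moser} iteration, testing with the shifted powers $\bigl((\pet)^++b\bigr)^n$ and $\bigl((\pst)^++b\bigr)^n$ where $b=\|H_\tau(h,\ct,0)\|_{q,\op}$, rather than your De Giorgi level-set truncations. In Moser the $\tau$ zero-order term is not simply dropped but rewritten via the mean identity \eqref{es1}; the exponent $qN\alpha_0/(2q-N)$ then falls out transparently from the geometric sum $\sum_{j\geq1}\ell^{-j}=1/(\ell-1)$ with $\ell=\frac{N}{N-2}\cdot\frac{q-1}{q}$, followed by a single $L^\infty$--$L^1$ interpolation where \eqref{esl2} enters and supplies the additive $(3d+3+2\alpha_0)/4$. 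Your route should yield the same bound, and your treatment of the $\tau$ term is in fact cleaner since $(u-k)^+$ is supported where $u>0$. One caution, however: ``absorbing the residual $L^2$ term from the Sobolev inequality via \eqref{esl2}'' is not right as stated---bounding $\|(u-k)^+\|_2$ by $\|u\|_2$ at every level gives a constant that does not iterate away and wrecks the recursion. The correct move is to estimate $\|(u-k)^+\|_2\leq \|(u-k)^+\|_{2N/(N-2)}|A_k|^{1/N}$ and absorb once $|A_k|$ is small; \eqref{esl2} should enter only at the end (via Chebyshev) to bound $y_0$ and fix the threshold level, and that is where the $(3d+3+2\alpha_0)/4$ actually appears. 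Also, your claim that $q/(q-1)\in[2,2N/(N-2))$ is false for large $q$; it lies in $(1,N/(N-2))$, though the inequality you actually need, $q/(q-1)<2N/(N-2)$, does hold for all $q>N/2$.
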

\begin{proof}Let $q$ be given as in the lemma. Set
$$b=\|H_\tau(h,\ct ,0)\|_{q,\op}.$$
Fix $n>0$. We use $\left(\left(\pet\right)^++b\right)^n$ as a test function in \eqref{lapp1} to get
\begin{eqnarray}\label{uve3}
	\lefteqn{	\frac{4n}{(n+1)^2}\io\kact\left|\nabla\left(\left(\pet\right)^++b\right)^{\frac{n+1}{2}}\right|^2dx+\tau\io \pet\left(\left(\pet\right)^++b\right)^{n}dx}\nonumber\\
	&=&\frac{1}{2} \alpha_4\iop H_\tau(h,\ct,\pst-\pet)\left(\left(\pet\right)^++b\right)^ndx.
\end{eqnarray}
By \eqref{es1},
$$\tau\io \left(\pet\right)^-dx=\tau\io \left(\pet\right)^+dx-\frac{1}{2} \alpha_4\iop H_\tau(h,\ct,\pst-\pet)dx.$$
With this in mind, we calculate
\begin{eqnarray}
	\tau\io \pet\left(\left(\pet\right)^++b\right)^{n}dx&=&	\tau\io \left(\pet\right)^+\left(\left(\pet\right)^++b\right)^{n}dx-b^n	\tau\io \left(\pet\right)^-dx\nonumber\\
	&=&	\tau\io \left(\pet\right)^+\left[\left(\left(\pet\right)^++b\right)^{n}-b^n\right]dx\nonumber\\
	&&+\frac{1}{2} \alpha_4 b^n\iop H_\tau(h,\ct,\pst-\pet)
	dx.\nonumber
\end{eqnarray}
Plug this into \eqref{uve3} to deduce
\begin{eqnarray}
	\lefteqn{	\frac{4n}{(n+1)^2}\io\kact\left|\nabla\left(\left(\pet\right)^++b\right)^{\frac{n+1}{2}}\right|^2dx}\nonumber\\
	&&+\tau\io \left(\pet\right)^+\left[\left(\left(\pet\right)^++b\right)^{n}-b^n\right]dx\nonumber\\
	&=&
	\frac{1}{2} \alpha_4\iop H_\tau(h,\ct,\pst-\pet)\left[\left(\left(\pet\right)^++b\right)^n-b^n\right]dx.\label{uve4}
\end{eqnarray}
Similarly, we can derive from \eqref{psi9} and \eqref{psi10} that
\begin{eqnarray}
	\lefteqn{\frac{4n}{(n+1)^2}\iop\sigma\left|\nabla\left(\left(\pst\right)^++b\right)^{\frac{n+1}{2}}\right|^2dx+\tau\io \left(\pst\right)^+\left[\left(\left(\pst\right)^++b\right)^{n}-b^n\right]dx}\nonumber\\
	&=&-\frac{1}{2} \alpha_4\iop H_\tau(h,\ct,\pst-\pet)\left[\left(\left(\pst\right)^++b\right)^n-b^n\right]dx.\label{uve5}
\end{eqnarray}
Since $G_\tau(y_1,y_2,y_3)$ is increasing in $y_3$, we have
$$N_\tau\equiv \left[G_\tau(h,\ct,\pst-\pet)-G_\tau(h,\ct ,0)\right]\left[\left(\left(\pst\right)^++b\right)^n-\left(\left(\pet\right)^++b\right)^n\right]\geq 0.$$
Add \eqref{uve5} to \eqref{uve4} and keep the above inequality and \eqref{gpr} in mind to deduce
\begin{eqnarray}
	\lefteqn{	\frac{4n}{(n+1)^2}\io\kact\left|\nabla\left(\left(\pet\right)^++b\right)^{\frac{n+1}{2}}\right|^2dx}\nonumber\\
	&&+	\frac{4n}{(n+1)^2}\iop\sigma\left|\nabla\left(\left(\pst\right)^++b\right)^{\frac{n+1}{2}}\right|^2dx\nonumber\\
	&\leq&
	-\frac{1}{2} \alpha_4\iop \sqrt{\theta_\tau(\ct)}N_\tau dx\nonumber\\
	&&+\frac{1}{2} \alpha_4\iop H_\tau(h,\ct ,0)\left[\left(\left(\pst\right)^++b\right)^n-\left(\left(\pet\right)^++b\right)^n\right]dx\nonumber\\
	&\leq&\frac{1}{2} \alpha_4b\left\|\left(\left(\pst\right)^++b\right)^n-\left(\left(\pet\right)^++b\right)^n\right\|_{\frac{q}{q-1},\op}\nonumber\\
	&\leq&\alpha_4\left\|(W+b)^{n+1}\right\|_{\frac{q}{q-1},\Omega},\label{uve6}
\end{eqnarray}
where 
\begin{equation}\label{wdef1}
W_\tau=\max\{\left(\pet\right)^+,\left(\pst\right)^+\chi_{\Omega^\prime}\}.	
\end{equation}
By the Sobolev inequality, we have
\begin{eqnarray}
	\lefteqn{\|(W_\tau+b)^{n+1}\|_{\frac{N}{N-2},\Omega}}\nonumber\\
	&=&\left(\int_{\left\{\left(\pet\right)^+\geq \left(\pst\right)^+\chi_{\Omega^\prime}\right\}}\left(\left(\pet\right)^++b\right)^{\frac{(n+1)N}{N-2}}dx\right)^{\frac{N-2}{N}}\nonumber\\
	&&+\left(\int_{\left\{\left(\pet\right)^+< \left(\pst\right)^+\chi_{\Omega^\prime}\right\}}\left(\left(\pst\right)^+\chi_{\Omega^\prime}+b\right)^{\frac{(n+1)N}{N-2}}dx\right)^{\frac{N-2}{N}}\nonumber\\
	&\leq&	\left(\io \left(\left(\pet\right)^++b\right)^{\frac{n+1}{2}\frac{2N}{N-2}}dx\right)^{\frac{N-2}{N}}+	\left(\iop \left(\left(\pst\right)^++b\right)^{\frac{n+1}{2}\frac{2N}{N-2}}dx\right)^{\frac{N-2}{N}}\nonumber\\
	&\leq& c\io\left|\nabla \left(\left(\pet\right)^++b\right)^{\frac{n+1}{2}}\right|^2dx+c\io \left(\left(\pet\right)^++b\right)^{n+1}dx\nonumber\\
	&&+ c\iop\left|\nabla \left(\left(\pst\right)^++b\right)^{\frac{n+1}{2}}\right|^2dx+c\iop \left(\left(\pst\right)^++b\right)^{n+1}dx\nonumber\\
	&\leq&\left(\frac{c(n+1)^2\alpha_4}{4nm_c^{\alpha_0}}+c|\Omega|^{\frac{1}{q}}\right)\left\|(W_\tau+b)^{n+1}\right\|_{\frac{q}{q-1},\Omega}.\label{uve7}
\end{eqnarray}
The last step is due to \eqref{uve6} and \eqref{wdef1}. Remember
$$\ell\equiv\frac{N}{N-2} \frac{q-1}{q}>1.$$
Then let $n+1=\ell^i, \ i=1,2,\cdots$. Subsequently,
$$\frac{n+1}{n}\leq \frac{\ell}{\ell-1}.$$
Take the $(n+1)^{\textup{th}}$ root of both sides of \eqref{uve7} to get
\begin{eqnarray}
	\|W_\tau+b\|_{\frac{q \ell^{i+1}}{q-1},\Omega}&=&
	\|W_\tau+b\|_{\frac{N(n+1)}{N-2},\Omega}\nonumber\\
	&\leq&\left(\frac{c(n+1)^2\alpha_4}{4nm_c^{\alpha_0}}+c|\Omega|^{\frac{1}{q}}\right)^{\frac{1}{n+1}}\left\|W_\tau+b\right\|_{\frac{q(n+1)}{q-1},\Omega}\nonumber\\
	&\leq &\left(c\left(\lt\right)^{\alpha_0}\right)^{\frac{1}{n+1}}(n+1)^{\frac{1}{n+1}}\left\|W_\tau+b\right\|_{\frac{q(n+1)}{q-1},\Omega}\nonumber\\
	&=&\left(c\left(\lt\right)^{\alpha_0}\right)^{\frac{1}{\ell^i}}\ell^{\frac{i}{\ell^i}}\left\|W_\tau+b\right\|_{\frac{q\ell^i}{q-1},\Omega}.\nonumber
\end{eqnarray}
 Iterate on $i$ to yield
$$
\|W_\tau+b\|_{\frac{q \ell^{i+1}}{q-1},\Omega}\leq \left(c\left(\lt\right)^{\alpha_0}\right)^{\sum_{j=1}^{i}\frac{1}{\ell^j}}\ell^{\sum_{j=1}^{i}\frac{j}{\ell^j}}\left\|W_\tau+b\right\|_{\frac{q\ell}{q-1},\Omega}.$$
Take $i\ra\infty$ to get
\begin{equation}\label{uve8}
	\esup_\Omega (W_\tau+b)\leq c\left(\lt\right)^{\frac{\alpha_0}{\ell-1}}\left\|W_\tau+b\right\|_{\frac{q\ell}{q-1},\Omega}.
\end{equation}
The interpolation inequality in (\cite{GT}, p.146) asserts
$$\left\|W_\tau+b\right\|_{\frac{q\ell}{q-1},\Omega}\leq \vep\esup_\Omega (W_\tau+b)+\frac{1}{\vep^{\frac{q\ell-q+1}{q-1}}}\left\|W_\tau+b\right\|_{1,\Omega},\ \ \vep>0.$$
Use this in \eqref{uve8} and choose $\vep$ appropriately in the resulting inequality to get
\begin{eqnarray}
	\esup_\Omega W_\tau&\leq&\esup_\Omega (W_\tau+b)\nonumber\\
	&\leq&c\left(\lt\right)^{\frac{q\ell\alpha_0}{(\ell-1)(q-1)}}\left\|W_\tau+b\right\|_{1,\Omega}\nonumber\\
	&\leq&c\left(\lt\right)^{\frac{qN\alpha_0}{2q-N}}\left(\left\|W_\tau\right\|_{1,\Omega}+\|H_\tau(h,\ct ,0)\|_{q,\op}\right).\label{uve9}
\end{eqnarray}
According to  \eqref{esl2}, we have
\begin{eqnarray}
	\|W_\tau\|_{\Omega,1}&\leq& 	\|\left(\pet\right)^+\|_{\Omega,1}+	\|\left(\pst\right)^+\|_{\op,1}\nonumber\\
&\leq&c\left(\lt\right)^{\frac{3d+3+2\alpha_0}{4}}\|H_\tau(h,\ct,0)\|_{2,\op}.\nonumber
\end{eqnarray}
This together with \eqref{uve9} implies 
\begin{eqnarray}
	\esup_\Omega \left(\pet\right)^++\esup_{\op} \left(\pst\right)^+
\leq c\left(\lt\right)^{\frac{qN\alpha_0}{2q-N}+\frac{3d+3+2\alpha_0}{4}}\|H_\tau(h,\ct ,0)\|_{q,\op}.\label{uve111}
\end{eqnarray}
In an entirely similar manner, we can also prove
\begin{eqnarray}
		\esup_\Omega \left(\pet\right)^-+\esup_{\op} \left(\pst\right)^-
	\leq c\left(\lt\right)^{\frac{qN\alpha_0}{2q-N}+\frac{3d+3+2\alpha_0}{4}}\|H_\tau(h,\ct ,0)\|_{q,\op}.\nonumber
\end{eqnarray}
Combining this with\eqref{uve111} yields \eqref{lesb}. The proof is complete.
\end{proof}

To continue the proof of Theorem \ref{thm2}, we let
$$\mt=\mt(T)=\max\{\max_{\overline{\ot}}\ct,1\}\geq 1.$$
We can easily see from \eqref{htdf} that
$$\left|H_\tau(h,\ct,0)\right|\leq c\left(\left(\mt\right)^{\frac{1}{2}}+\left(\mt\right)^{d+\frac{1}{2}}\right)\left(\lt\right)^d\leq c\left(\mt\right)^{d+\frac{1}{2}}\left(\lt\right)^d.$$
This together with \eqref{lesb} implies
$$	\esup_\Omega |\pet|+\esup_{\op} |\pst|
\leq c\left(\mt\right)^{d+\frac{1}{2}}\left(\lt\right)^{\frac{qN\alpha_0}{2q-N}+\frac{7d+3+2\alpha_0}{4}}.$$
We conclude from Lemmas \ref{lcub} and \ref{lclb} that
\begin{eqnarray}
\mt&\leq&c(1+T)^{\frac{2N}{(N+2)}}T^{\frac{2(2q-2-N)}{q(N+2)}}\left\|e^{\alpha_2(\ps-\pe)}\right\|_{q,\opt}^{2}+4\|C_0\|_{\infty,\Omega}+2\nonumber\\
&\leq&c(1+T)^{\frac{2N}{(N+2)}}T^{\frac{4}{N+2}}e^{c\left(\mt\right)^{d+\frac{1}{2}}\left(\lt\right)^{\frac{qN\alpha_0}{2q-N}+\frac{7d+3+2\alpha_0}{4}}}+4\|C_0\|_{\infty,\Omega}+2,\label{dff1}\\
\lt	&\leq& c(1+T)^{\frac{2N}{(N+2)(2d-1)}}\left\|e^{-\alpha_2(\pst-\pet)}\right\|_{q,\opt}^{\frac{2}{2d-1}}+\frac{1}{\min_{\overline{\Omega}}C_0}\nonumber\\
&\leq & c(1+T)^{\frac{2N}{(N+2)(2d-1)}}T^{\frac{2}{q(2d-1)}}e^{c\left(\mt\right)^{d+\frac{1}{2}}\left(\lt\right)^{\frac{qN\alpha_0}{2q-N}+\frac{7d+3+2\alpha_0}{4}}}+\frac{1}{\min_{\overline{\Omega}}C_0}.\label{dff2}
\end{eqnarray}
Set
\begin{eqnarray}
	\gamma&=&\frac{qN\alpha_0}{2q-N}+\frac{7d+3+2\alpha_0}{4}>d+\frac{1}{2}>1,\label{r16}\\
	g(T)&=&\max\left\{(1+T)^{\frac{2N}{(N+2)}}T^{\frac{2(2q-2-N)}{q(N+2)}}, (1+T)^{\frac{2N}{(N+2)(2d-1)}}T^{\frac{2}{q(2d-1)}}\right\}.\label{r17}
\end{eqnarray}
We can conclude from \eqref{dff1} and \eqref{dff2} that there is a positive number $c$ such that
\begin{eqnarray}
	\mt\lt< cg^2(T)e^{c\left(\mt\lt\right)^\gamma}+c.\nonumber
\end{eqnarray}
We are in a position to invoke Lemma \ref{prop2.2}. For this purpose set
$$a(s)=\mt(s)\lt(s),\ \ \ep=cg^2(T).$$
Subsequently,
$$a(s)< cg^2(s)e^{ca^\gamma(s)}+c\leq \ep e^{ca^\gamma(s)}+c\ \ \mbox{for each $s\in [0,T]$.}$$
Here we have used the fact that $g(T)$ strictly increases from $0$ to infinity on the interval $[0,\infty)$. We are in a position to apply Lemma \ref{prop2.2}. Let $\ep_0, s_0$ be determined by Lemma \ref{prop2.2}. Obviously, there is a unique $\tmax\in(0,\infty)$ such that
\begin{equation}\label{tzdf}
	\ep_0=cg^2(\tmax).
\end{equation}
This implies
$$a(s)< s_0\ \ \mbox{for each $s\in [0,\tmax]$ since we obviously have $a(0)\leq c$.}$$
Consequently,
\begin{equation}\label{tzdf1}
	\frac{1}{c}\leq \ct\leq c\ \ \mbox{in $\Omega_{\tmax}$ for some $c>0$.}
\end{equation}
\begin{lemma}Let $\tmax$ be given as in \eqref{tzdf}. For each $p>1$ the sequence $\{\pet\}$ is precompact in $L^p(\Omega_{\tmax})$ and the sequence $\{\pst\}$ is precompact in $L^p(\Omega^\prime_{\tmax})$.
\end{lemma}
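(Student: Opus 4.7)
The plan is to combine uniform spatial regularity with uniform equicontinuity in time, and then invoke the Kolmogorov--Riesz--Fr\'echet compactness criterion.

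By \eqref{tzdf1}, on $\Omega_{\tmax}$ we have $1/c\le \ct\le c$ uniformly in $\tau$. Since $\kappa\in C[0,\infty)$ with $\kappa>0$ on $(0,\infty)$, this forces a two-sided bound $0<\kappa_0\le\kactt\le\kappa_1$ independent of $\tau$. Revisiting \eqref{ese8} and \eqref{lesb} with $\lt\le c$ uniformly, one obtains
$$\sup_{t\in[0,\tmax]}\bigl(\|\pet(\cdot,t)\|_{W^{1,2}(\Omega)}+\|\pst(\cdot,t)\|_{W^{1,2}(\op)}\bigr)+\|\pet\|_{\infty,\Omega_{\tmax}}+\|\pst\|_{\infty,\Omega^\prime_{\tmax}}\le c,$$
uniformly in $\tau$. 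This controls equi-integrability and spatial translations.

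For time translations, observe first that \eqref{lapp3} is a linear parabolic equation with uniformly elliptic $D$ and a right-hand side uniformly bounded in $L^\infty(\Omega_{\tmax})$; combined with $C_0\in C^{\alpha_5}(\overline{\Omega})$ from (H7), the De Giorgi--Nash--Moser theorem furnishes a $\tau$-independent modulus $\omega$ with $|\ct(x,t)-\ct(x,s)|\le\omega(|t-s|)$ on $\overline{\Omega_{\tmax}}$, and (H6) yields the same for $h$. For $\eta\in(0,\tmax)$ and $t\in[0,\tmax-\eta]$, set $\Delta u:=u(\cdot,t+\eta)-u(\cdot,t)$, subtract \eqref{lapp1} and \eqref{lapp2} at the two times, test with $\Delta\pet$ and $\Delta\pst$ respectively, and add. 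The key manipulation is to decompose the time-difference of $H_\tau$ into a pure $y_3$-increment (with $(y_1,y_2)$ frozen at $t+\eta$), which pairs with $\Delta(\pst-\pet)$ to produce a non-negative term via the monotonicity \eqref{gpr} and the uniform lower bound on $\ct$; and a remainder in which $y_3$ is frozen while $(h,\ct)$ vary, which is $O(\omega(\eta))$ in $L^\infty$ by smoothness of $H_\tau$ on the compact range of its arguments. The stray contribution $[\kactt(\ct(t+\eta))-\kactt(\ct(t))]\nabla\pet(t)$ is absorbed into the coercive term $\int|\nabla\Delta\pet|^2\,dx$ by Cauchy--Schwarz and Young, using the uniform modulus for $\kactt\circ\ct$. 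Applying the integral constraint \eqref{es2} (which clearly survives the $\Delta$-operation) together with (H9), in the same fashion as in the passage from \eqref{ese8} and \eqref{ese4} to \eqref{esl2}, promotes the resulting gradient controls to $L^2$ controls of $\Delta\pet,\Delta\pst$ themselves, yielding
$$\int_0^{\tmax-\eta}\bigl(\|\Delta\pet\|_{2,\Omega}^2+\|\Delta\pst\|_{2,\op}^2\bigr)dt\le c\,\omega(\eta)\longrightarrow 0\ \text{as}\ \eta\to 0^+,$$
uniformly in $\tau$.

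The Kolmogorov--Riesz--Fr\'echet theorem now delivers precompactness of $\{\pet\}$ in $L^2(\Omega_{\tmax})$ and of $\{\pst\}$ in $L^2(\Omega^\prime_{\tmax})$; interpolating with the uniform $L^\infty$-bound extends this to $L^p$ for every $p\in(1,\infty)$. The principal obstacle is the time-equicontinuity step: the decomposition of $H_\tau$ must be arranged so that \eqref{gpr} yields precisely the coercive quantity $\|\Delta(\pst-\pet)\|_{2,\op}^2$, which then, via \eqref{es2} and (H9), is parlayed into control of the full $L^2$-norms of $\Delta\pet$ and $\Delta\pst$ rather than only of their gradients. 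This is the same constraint-plus-hypothesis mechanism that made \eqref{esl2} work, and it is precisely where (H9) becomes indispensable.
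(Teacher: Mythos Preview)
Your proposal is correct and follows essentially the same approach as the paper: subtract \eqref{lapp1}--\eqref{lapp2} at two times, decompose the $H_\tau$-difference so that the $y_3$-increment produces coercivity via \eqref{gpr} while the $(h,\ct)$-increment is controlled by the uniform H\"older modulus of $\ct$ and the continuity of $h$, absorb the $\kappa$-difference term, and then invoke the integral constraint \eqref{es2} together with (H9) exactly as in the derivation of \eqref{esl2} to upgrade to full $L^2$ control of the time-differences. The only cosmetic difference is that the paper concludes via Ascoli in $C([0,\tmax];L^2)$ (Simon's lemma) whereas you use Kolmogorov--Riesz--Fr\'echet in $L^2(\Omega_{\tmax})$ directly; both yield the stated $L^p$-precompactness after interpolation with the uniform $L^\infty$ bound.
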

\begin{proof} Let $t_1, t_2\in [0,\tmax]$. For simplicity, we employ the following notions
	\begin{equation}
	h_t(x)=h(x,t),\	\ct_{t}(x)=\ct(x,t),\ \left(\pet\right)_{t}(x)=\pet(x,t),\ \left(\pst\right)_t(x)=\pst(x,t).\nonumber
	\end{equation}
We can derive from 	\eqref{lapp1} and \eqref{lapp2} that
	\begin{eqnarray}
		\lefteqn{	-\mdiv\left[\kappa_\tau(\ct_{t_1})\nabla\left[\left(\pet\right)_{t_1}-\left(\pet\right)_{t_2}\right]\right]+\tau\left(\left(\pet\right)_{t_1}-\left(\pet\right)_{t_2}\right)}\nonumber\\
		&=&\mdiv\left[\left(\kappa_\tau(\ct_{t_1})-\kappa_\tau(\ct_{t_2})\right)\nabla\left(\pet\right)_{t_2}\right]+\frac{1}{2}\alpha_4H_\tau\left((h_{t_1},\ct_{t_1},\left(\pst\right)_{t_1}-\left(\pet\right)_{t_1}\right)\chi_{\Omega^\prime}\nonumber\\
		&&-\frac{1}{2}\alpha_4H_\tau\left(h_{t_2},\ct_{t_2},\left(\pst\right)_{t_2}-\left(\pet\right)_{t_2}\right)\chi_{\Omega^\prime} \ \mbox{in $\Omega$},\label{def10}\\
		\lefteqn{	-\mdiv\left[\sigma\nabla\left[\left(\pst\right)_{t_1}-\left(\pst\right)_{t_2}\right]\right]+\tau\left(\left(\pst\right)_{t_1}-\left(\pst\right)_{t_2}\right)}\nonumber\\
		&=&-\frac{1}{2}\alpha_4H_\tau\left((h_{t_1},\ct_{t_1},\left(\pst\right)_{t_1}-\left(\pet\right)_{t_1}\right)\nonumber\\
		&&+\frac{1}{2}\alpha_4H_\tau\left(h_{t_2},\ct_{t_2},\left(\pst\right)_{t_2}-\left(\pet\right)_{t_2}\right) \ \mbox{in $\op$}.\label{def11}
	\end{eqnarray}
	Observe that
	\begin{eqnarray}
		\lefteqn{H_\tau\left((h_{t_1},\ct_{t_1},\left(\pst\right)_{t_1}-\left(\pet\right)_{t_1}\right)-H_\tau\left(h_{t_2},\ct_{t_2},\left(\pst\right)_{t_2}-\left(\pet\right)_{t_2}\right)}\nonumber\\
		&=&H_\tau\left((h_{t_1},\ct_{t_1},\left(\pst\right)_{t_1}-\left(\pet\right)_{t_1}\right)-H_\tau\left(h_{t_2},\ct_{t_1},\left(\pst\right)_{t_1}-\left(\pet\right)_{t_1}\right)\nonumber\\
		&&+H_\tau\left(h_{t_2},\ct_{t_1},\left(\pst\right)_{t_1}-\left(\pet\right)_{t_1}\right)-H_\tau\left(h_{t_2},\ct_{t_2},\left(\pst\right)_{t_1}-\left(\pet\right)_{t_1}\right)\nonumber\\
		&&+H_\tau\left(h_{t_2},\ct_{t_2},\left(\pst\right)_{t_1}-\left(\pet\right)_{t_1}\right)-H_\tau\left(h_{t_2},\ct_{t_2},\left(\pst\right)_{t_2}-\left(\pet\right)_{t_2}\right)\nonumber\\
		&\equiv&J_1+J_2+J_3.\nonumber
	\end{eqnarray}
We can easily conclude from \eqref{tzdf1} and \eqref{lesb} that
\begin{eqnarray}
	|J_1|&\leq &c\|h_{t_1}-h_{t_2}\|_{\infty,\op},\nonumber\\
	|J_2|&\leq &c\|\ct_{t_1}-\ct_{t_2}\|_{\infty,\op}.\nonumber
\end{eqnarray}
By an argument similar to \eqref{rej4}, we can obtain
\begin{eqnarray}
	\lefteqn{J_3\left[\left[\left(\pst\right)_{t_1}-\left(\pet\right)_{t_1}\right]-\left[\left(\pst\right)_{t_2}-\left(\pet\right)_{t_2}\right]\right]}\nonumber\\
	&\geq &c\left[\left[\left(\pst\right)_{t_1}-\left(\pst\right)_{t_2}\right]-\left[\left(\pet\right)_{t_1}-\left(\pet\right)_{t_2}\right]\right]^2.\nonumber
\end{eqnarray}
Use $\left(\pet\right)_{t_1}-\left(\pet\right)_{t_2}$ as a test function in \eqref{def10}, $\left(\pst\right)_{t_1}-\left(\pst\right)_{t_2}$ in \eqref{def11},  add up the two resulting equations to deduce
\begin{eqnarray}
	\lefteqn{\io\left|\nabla\left[\left(\pet\right)_{t_1}-\left(\pet\right)_{t_2}\right]\right|^2dx+\iop\left|\nabla\left[\left(\pst\right)_{t_1}-\left(\pst\right)_{t_2}\right]\right|^2dx}\nonumber\\
	&&c\iop\left[\left[\left(\pst\right)_{t_1}-\left(\pst\right)_{t_2}\right]-\left[\left(\pet\right)_{t_1}-\left(\pet\right)_{t_2}\right]\right]^2dx\nonumber\\
	&\leq&c\|\kappa(\ct_{t_1}+\tau)-\kappa(\ct_{t_2}+\tau)\|_{\infty,\op}+c\|h_{t_1}-h_{t_2}\|_{\infty,\op}+c\|\ct_{t_1}-\ct_{t_2}\|_{\infty,\op}.\nonumber
\end{eqnarray}
Integrate \eqref{def10} over $\Omega$ and \eqref{def11} over $\op$ and add up the two resulting equations to get
$$\io \left[\left(\pet\right)_{t_1}-\left(\pet\right)_{t_2}\right]dx+\iop\left[\left(\pst\right)_{t_1}-\left(\pst\right)_{t_2}\right]dx=0.$$
This puts us in a position to employ the proof of \eqref{esl2}. Upon doing so, we arrive at
\begin{eqnarray}
	\lefteqn{\io\left[\left(\pet\right)_{t_1}-\left(\pet\right)_{t_2}\right]^2dx+\iop\left[\left(\pst\right)_{t_1}-\left(\pst\right)_{t_2}\right]^2dx}\nonumber\\
	&\leq&c\|\kappa(\ct_{t_1}+\tau)-\kappa(\ct_{t_2}+\tau)\|_{\infty,\op}+c\|h_{t_1}-h_{t_2}\|_{\infty,\op}+c\|\ct_{t_1}-\ct_{t_2}\|_{\infty,\op}.\label{def5}
\end{eqnarray}
Classical regularity results for linear parabolic equations (\cite{LSU}, Chap. III) assert that $\{\ct\}$ is bounded in $C^{\frac{\delta}{2},\delta}(\overline{\Omega_{\tmax}})$ for some $\delta\in(0,1)$. This combined with \eqref{def5} implies that 
$\{\pet\}$ is uniformly equicontinuous in $C([0,\tmax]; L^2(\Omega))$ and $\{\pst\}$ in $C([0,\tmax]; L^2(\op))$. Furthermore, $\{\pet\}$ is bounded in $L^\infty(0, \tmax; W^{1,2}(\Omega))$ and $\{\pet\}$ in $L^\infty(0, \tmax; W^{1,2}(\op))$. By virtue of the Ascoli theorem (\cite{S}, Lemma 1), the two sequences are precompact in $C([0,\tmax]; L^2(\Omega))$ and $C([0,\tmax]; L^2(\op))$, respectively. This implies the lemma.
	\end{proof}
Now we have all the ingredients necessary to pass to the limit in \eqref{app1}-\eqref{app6}. The proof of Theorem \ref{thm1} is completed.

\end{document}